\numberwithin{equation}{section}
\keywords{Lipschitz, Lusin, PI space, Poincar\'e inequality, measurable differentiable structure}
\subjclass[2010]{26B05, 30L99} 
\email{guydavid@math.nyu.edu}
\address{Guy C. David\\ Courant Institute of Mathematical Sciences\\ New York University\\ 251 Mercer Street\\ New York, NY 10012 }
\title{Lusin-type theorems for Cheeger derivatives on metric measure spaces}
\author{Guy C. David}
\date{January 25, 2015}
\begin{document}

\begin{abstract}
A theorem of Lusin states that every Borel function on $\mathbb{R}$ is equal almost everywhere to the derivative of a continuous function. This result was later generalized to $\mathbb{R}^n$ in works of Alberti and Moonens-Pfeffer. In this note, we prove direct analogs of these results on a large class of metric measure spaces, those with doubling measures and Poincar\'e inequalities, which admit a form of differentiation by a famous theorem of Cheeger.
\end{abstract}

\maketitle
\newtheorem{thm}[equation]{Theorem}
\newtheorem{cor}[equation]{Corollary}
\newtheorem{claim}[equation]{Claim}
\newtheorem{lemma}[equation]{Lemma}
\newtheorem{prop}[equation]{Proposition}

\theoremstyle{definition}
\newtheorem{definition}[equation]{Definition}
\newcommand{\RR}{\mathbb{R}}
\newcommand{\LIP}{\textnormal{LIP}}
\newcommand{\dist}{\textnormal{dist}}
\newcommand{\diam}{\textnormal{diam }}
\newcommand{\Lip}{\text{\textnormal{Lip}}}

\section{Introduction}\label{sectionintro}

A classical theorem of Lusin \cite{Lu17} states that for every Borel function $f$ on $\RR$, there is a continuous function $u$ on $\RR$ that is differentiable almost everywhere with derivative equal to $f$.

In \cite{Al91}, Alberti gave a related result in higher dimensions. He proved the following theorem, in which $|\cdot|$ denotes Lebesgue measure and $Du$ denotes the standard Euclidean derivative of $u$.
\begin{thm}[\cite{Al91}, Theorem 1]\label{albertioriginal}
Let $\Omega\subset \RR^k$ be open with $|\Omega|<\infty$, and let $f\colon \Omega\rightarrow\RR^k$ be a Borel function. Then for every $\epsilon>0$, there exist an open set $A\subset \Omega$ and a function $u\in C^1_0(\Omega)$ such that
\begin{enumerate}[\normalfont (a)]
\item $|A|\leq \epsilon |\Omega|$,
\item $f=Du$ on $\Omega\setminus A$, and
\item $\|Du\|_p \leq C\epsilon^{\frac{1}{p} - 1}\|f\|_p$ for all $p\in [1,\infty]$.
\end{enumerate}
Here $C>0$ is a constant that depends only on $k$.
\end{thm}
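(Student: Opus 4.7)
The plan is a ``construction plus iteration'' scheme. I would first establish a building block lemma: for any cube $Q\subset\RR^k$, vector $v\in\RR^k$, and parameter $\delta\in(0,1)$, there exist $\phi\in C^1_c(Q)$ and a set $E\subset Q$ with $|E|\leq\delta|Q|$, $D\phi\equiv v$ on $Q\setminus E$, and $\|D\phi\|_p\leq C\delta^{1/p-1}|v|\,|Q|^{1/p}$ for every $p\in[1,\infty]$. The natural construction is $\phi(x)=\eta(x)\,v\cdot(x-x_Q)$, where $\eta$ is a smooth cutoff equal to $1$ on the concentric dilate $(1-\delta)Q$ and vanishing off $Q$, with $\|D\eta\|_\infty\lesssim 1/(\delta\ell(Q))$: on $(1-\delta)Q$ the gradient equals $v$ identically, while on the collar of measure $\approx\delta|Q|$ it has size $\lesssim|v|/\delta$, which integrates to the claimed $L^p$ bound.

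For the global step, I would approximate the Borel function $f$ by a simple function $\tilde f=\sum_i c_i\chi_{E_i}$ in the relevant $L^p$ norms and cover each $E_i$, up to a set of arbitrarily small measure, by a disjoint family of dyadic cubes $Q_{ij}\subset\Omega$. Applying the building block on each $Q_{ij}$ with vector $c_i$ and parameter $\delta\asymp\epsilon$, then summing over the disjoint supports, produces $u_0\in C^1_c(\Omega)$ with $Du_0=\tilde f$ off a set $A_0$ of measure at most $\epsilon|\Omega|/2$; summing the cube-by-cube estimates on disjoint supports yields $\|Du_0\|_p\leq C\epsilon^{1/p-1}\|\tilde f\|_p\leq C'\epsilon^{1/p-1}\|f\|_p$.

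The principal difficulty is upgrading ``$Du=\tilde f$ off $A$'' to ``$Du=f$ off $A$'', since the simple approximation necessarily disagrees with $f$ on a set of positive measure, and the collar corrections further perturb $Du$ inside $A_0$. I would iterate, applying the same procedure to the residual $g_n=(f-D(u_0+\cdots+u_{n-1}))\chi_{A_{n-1}}$ restricted to $A_{n-1}$, with a geometric sequence of parameters $\epsilon_n\downarrow 0$, obtaining $u_n$ supported in $A_{n-1}$ and exceptional sets $A_n\subset A_{n-1}$ of measure $\leq\epsilon_n|A_{n-1}|$. The delicate $L^p$ bookkeeping---the main obstacle of the proof---is to choose the parameters so that the geometric blow-up of $\|g_n\|_p$ caused by the $\delta^{-1}$ factor on each generation's collar is dominated by the decay of $\epsilon_n^{1/p-1}$; a careful choice produces $u=\sum_n u_n\in C^1_c(\Omega)$ with $Du=f$ off a set of measure zero, which is then replaced by an open $A$ with $|A|\leq\epsilon|\Omega|$ by passing to a slightly enlarged open neighborhood and using continuity of $Du$.
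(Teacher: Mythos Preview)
Note first that the paper does not itself prove Theorem~\ref{albertioriginal}; it is quoted from \cite{Al91}. The paper's proof of the PI-space analogue (Theorem~\ref{alberti}) follows Alberti's scheme closely, so I compare against that. Your building block---a cutoff times a linear function on a cube, with the $L^p$ bound coming from the collar estimate---is exactly right and is the heart of Alberti's Lemma~7 (Lemma~\ref{mainlemma} here). The gap is in the iteration. You take $g_n=(f-D\sum_{k<n}u_k)\chi_{A_{n-1}}$ and build $u_n$ supported in $A_{n-1}$. Then on $\Omega\setminus A_0$ the only contribution to $Du$ is $Du_0$, and by your own construction $Du_0=\tilde f$ there, the \emph{simple} approximant---not $f$. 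A step function $\sum c_i\chi_{E_i}$ close to $f$ in $L^p$ will agree with a generic Borel $f$ on at most a null set, so the error $f-\tilde f$ on the large set $\Omega\setminus A_0$ is never corrected by later stages, and the asserted conclusion ``$Du=f$ off a null set'' fails.

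What Alberti (and this paper, in Lemma~\ref{mainlemma} and Step~1 of the proof of Theorem~\ref{alberti}) actually does differs in one crucial respect: the single-stage lemma is arranged so that $|Du_0-f|\le\alpha$ on the good set, for an \emph{arbitrary} $\alpha>0$, not merely $Du_0=\tilde f$. This is achieved by first using Lusin (Lemma~\ref{francos}) to replace $f$ by a continuous function equal to $f$ off a small set, then choosing the cubes small enough that this continuous function oscillates by at most $\alpha/2$ on each, and taking $a_i$ to be its average over the cube. The residual at stage $n$ then has $L^\infty$ norm at most $\alpha_n$ on the good set, and choosing $\alpha_n$ to decay rapidly (see the computation~(\ref{bigsum})) simultaneously yields uniform convergence of $\sum Du_n$ (needed for $u\in C^1$) and the $L^p$ bounds for every $p$. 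In short, the iteration runs on all of $\Omega$ with the residual shrinking in $L^\infty$, not on a shrinking sequence of bad sets; your ``restrict to $A_{n-1}$'' step is precisely where the argument breaks.
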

In other words, Alberti showed that it is possible to arbitrarily prescribe the gradient of a $C^1_0$ function $u$ on $\Omega\subset\RR^k$ off of a set of arbitrarily small measure, with quantitative control on all $L^p$ norms of $Du$.

Moonens and Pfeffer \cite{MP08} applied Alberti's result to show a more direct analog of the Lusin theorem in higher dimensions:
\begin{thm}[\cite{MP08}, Theorem 1.3]\label{mporiginal}
Let $\Omega\subset \RR^k$ be an open set and let $f\colon \Omega\rightarrow \RR^k$ be measurable. Then for any $\epsilon>0$, there is an almost everywhere differentiable function $u\in C(\RR^k)$ such that
\begin{enumerate}[\normalfont (a)]
\item $\|u\|_\infty \leq \epsilon$ and $\{u\neq 0\}\subset \Omega$,
\item $Du = f$ almost everywhere in $\Omega$, and
\item $Df = 0$ everywhere in $\RR^k \setminus \Omega$. 
\end{enumerate}
\end{thm}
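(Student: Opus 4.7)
\bigskip

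\noindent\textbf{Proof proposal.} The plan is to iterate Theorem \ref{albertioriginal} on a Whitney-type decomposition of $\Omega$. First I would write $\Omega = \bigsqcup_j Q_j$ as a countable family of essentially disjoint dyadic cubes with $\diam Q_j$ comparable to $\dist(Q_j, \partial\Omega)$, so that cubes accumulating at $\partial\Omega$ are geometrically small. This decomposition lets us build $u$ as $\sum_j u_j$ with each $u_j$ supported in a single $Q_j$; continuity of $u$ across $\partial\Omega$ will then reduce to having $\|u_j\|_\infty$ vanish fast enough near $\partial\Omega$, and the same geometric control will yield differentiability of $u$ off $\Omega$.

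Fix a sequence $\delta_j > 0$ with $\sup_j \delta_j \leq \epsilon$ and $\delta_j/\diam Q_j \to 0$ as $\dist(Q_j, \partial\Omega) \to 0$. On each $Q_j$ I would construct $u_j \in C_0(\textnormal{int}\, Q_j)$, differentiable a.e.\ with $Du_j = f$ a.e.\ on $Q_j$ and $\|u_j\|_\infty \leq \delta_j$, by iterating Theorem \ref{albertioriginal}: apply it to a truncation of $f\chi_{Q_j}$ with a small parameter to get $u_j^{(1)} \in C_0^1(\textnormal{int}\, Q_j)$ with $Du_j^{(1)} = f$ off a bad set $A_j^{(1)} \subset Q_j$ of small measure; then apply it again on an open neighborhood of $A_j^{(1)}$ inside $Q_j$ to the residual $f - Du_j^{(1)}$, and so on. With parameters chosen so that the bad sets have rapidly decreasing measure and the $\|u_j^{(n)}\|_\infty$ are summable, $u_j := \sum_n u_j^{(n)}$ converges uniformly to a function with the required properties.

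Setting $u := \sum_j u_j$ and extending by zero on $\RR^k \setminus \Omega$ produces a continuous function satisfying (a) and (b). For (c), given $x \notin \Omega$ and $y$ near $x$, either $u(y) = 0$ or $y \in Q_j$ with $\diam Q_j \lesssim \dist(y, \partial\Omega) \leq |y - x|$; hence $|u(y)|/|y-x| \leq \delta_j/\diam Q_j \to 0$, giving $Du(x) = 0$. The main obstacle is the parameter bookkeeping in the inner iteration. Since Theorem \ref{albertioriginal} bounds only $\|Du\|_p$, the norms $\|u_j^{(n)}\|_\infty$ must be controlled indirectly (for example by $\diam Q_j \cdot \|Du_j^{(n)}\|_\infty$), and because $f$ is merely measurable, a careful truncation at each iterative step is needed to keep $\|Du_j^{(n)}\|_\infty$ finite. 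Balancing the tradeoff in Theorem \ref{albertioriginal}(c) between the measure of the bad set and the sup-norm of $Du$, so that the $\|u_j^{(n)}\|_\infty$ remain summable while the bad sets collapse to measure zero, is the technically delicate part.
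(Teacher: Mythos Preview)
Your outline has the right spirit---iterate Alberti's theorem and sum---but there is a genuine gap in the inner iteration, and it is precisely the point you flag as ``technically delicate''.

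Summability of $\|u_j^{(n)}\|_\infty$ gives only uniform convergence, hence continuity of $u_j$; it does \emph{not} give almost-everywhere differentiability (think of a Weierstrass-type series $\sum 2^{-n}\sin(4^n x)$). To get $u_j$ differentiable a.e.\ with $Du_j=f$ you would need $\sum_n \|Du_j^{(n)}\|_\infty<\infty$, so that $u_j$ is Lipschitz and Rademacher applies. But this is incompatible with your truncation scheme. From Theorem~\ref{albertioriginal}(c) with $p=\infty$ one has $\|Du_j^{(n)}\|_\infty \ge C\,\epsilon_n^{-1}M_n \ge C M_n$, where $M_n$ is the truncation level for the residual $g_n=f-\sum_{k<n}Du_j^{(k)}$. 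On the current bad set the residual still contains the untruncated $f$, so to make $|\{|g_n|>M_n\}|$ small you must let $M_n$ grow without bound; hence $\sum_n \|Du_j^{(n)}\|_\infty$ cannot be forced to converge. No choice of the Alberti parameters $\epsilon_n$ rescues this: taking $\epsilon_n$ bounded away from zero prevents the bad sets from shrinking, while taking $\epsilon_n\to 0$ only worsens $\|Du_j^{(n)}\|_\infty$.

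The missing idea---and the heart of the Moonens--Pfeffer argument that the paper follows (see Lemma~\ref{mplemma} and the proof of Theorem~\ref{mpthm})---is to obtain not merely a sup-norm bound on each correction but the \emph{quadratic} decay
\[
|u_i(x)|\le \epsilon_i\min\{1,\dist^2(x,X\setminus\Omega_i)\}.
\]
This is achieved by applying Alberti on cubes of very small diameter inside $\Omega_i$, so that $\|u_i\|_\infty\le(\diam)\cdot\LIP(u_i)$ is as small as one likes relative to $\dist(\Omega_i',\partial\Omega_i)$. The payoff is that if $x$ lies in the good set $K_i$, then $x\notin\Omega_k$ for all $k>i$, and the quadratic bound gives $|u_k(y)-u_k(x)|\le 2^{-k}\epsilon\, d(x,y)^2$, whence $\Lip\big(\sum_{k>i}u_k\big)(x)=0$. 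Thus at a.e.\ point only \emph{finitely many} (Lipschitz) terms contribute to the derivative, and one never needs $\sum\LIP(u_i)<\infty$. This is also what makes your condition (c) work cleanly at $\partial\Omega$, without the extra Whitney bookkeeping.

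Structurally, the paper does not use a Whitney decomposition at all: it runs a single outer iteration on the residual open sets $\Omega_i=\Omega\cap B_i\setminus\bigcup_{k<i}K_k$, applying Lemma~\ref{mplemma} once per step. Your Whitney layer is harmless but unnecessary once the $\dist^2$ mechanism is in place.
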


These ``Lusin-type'' results for derivatives in Euclidean space have applications to integral functionals on Sobolev spaces \cite{Al91}, to the construction of horizontal surfaces in the Heisenberg group (\cite{Ba03, HM13}) and in the analysis of charges and normal currents \cite{MP08}. In addition, we remark briefly that the results of Alberti and Moonens-Pfeffer have been generalized to higher order derivatives on Euclidean space in the work of Francos \cite{Fr12} and Haj\l asz-Mirra \cite{HM13}, though we do not pursue those lines here.

The purpose of this note is to extend the results of Alberti and Moonens-Pfeffer, in a suitable sense, to a class of metric measure spaces on which differentiation is defined.

In his seminal 1999 paper, Cheeger \cite{Ch99} defined (without using this name) the notion of a ``measurable differentiable structure'' for a metric measure space. Cheeger showed that a large class of spaces, the so-called PI spaces, possess such a structure. A differentiable structure endows a metric measure space with a notion of differentiation and a version of Rademacher's theorem: every Lipschitz function is differentiable almost everywhere with respect to the structure.

The class of PI spaces includes Euclidean spaces, all Carnot groups (such as the Heisenberg group), and a host of more exotic examples like those of \cite{BP99}, \cite{La00}, and \cite{CK13_PI}.

We prove the following two analogs of the results of Alberti and Moonens-Pfeffer for PI spaces. All the definitions are given in Section \ref{sectiondefs} below.

\begin{thm}\label{alberti}
Let $(X,d,\mu)$ be a PI space and let $\{(U_j, \phi_j\colon X\rightarrow\RR^{k_j})\}_{j\in J}$ be a measurable differentiable structure on $X$. Then there are constants $C,\eta>0$ with the following property:

Let $\Omega \subset X$ be open with $\mu(\Omega)<\infty$ and let $\{f_j\colon U_j \cap \Omega \rightarrow \RR^{k_j}\}_{j\in J}$ be a collection of Borel functions. Then for all $\epsilon>0$ there is an open set $A\subset \Omega$ and a Lipschitz function $u\in C_0(\Omega)$ such that
\begin{equation}\label{alberti1}
\mu(A) \leq \epsilon \mu(\Omega),
\end{equation}
\begin{equation}\label{alberti2}
f_j = d^j u \text{ a.e. on } U_j \cap (\Omega \setminus A)
\end{equation}
for all $j\in J$,
\begin{equation}\label{alberti3}
\| \Lip_u \|_p \leq C \epsilon^{\frac{1}{p}-\frac{1}{\eta}}\left(\sum_{j\in J} (\LIP(\phi_j))^p \int_{\Omega \cap U_j}|f_j|^p \right)^{1/p}
\end{equation}
for all $p\in [1,\infty)$, and
\begin{equation}\label{alberti4}
\| \Lip_u \|_\infty \leq C \epsilon^{-\frac{1}{\eta}}\sup_{j\in J}\left(\LIP(\phi_j) \|f_j\|_\infty\right). 
\end{equation}

The constants $C,\eta>0$ depend only on the data of $X$.
\end{thm}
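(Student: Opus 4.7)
The plan is to adapt Alberti's original construction to the PI setting. The essential building block is this: for each Cheeger chart $(U_j,\phi_j\colon X\to\RR^{k_j})$ and each constant vector $v\in\RR^{k_j}$, the function $x\mapsto v\cdot\phi_j(x)$ is Lipschitz on all of $X$ with $\LIP(v\cdot\phi_j)\le|v|\LIP(\phi_j)$ and $d^j(v\cdot\phi_j)\equiv v$ almost everywhere on $U_j$. This provides, for any constant $v$, a globally defined Lipschitz function with prescribed Cheeger derivative on the chart, which can then be localized via cutoffs and partitions of unity.

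First I would reduce. By $L^p$-density of simple functions and by splitting the chart decomposition, one may assume $J$ is finite and each $f_j$ has the form $\sum_i v_i^j\chi_{E_i^j}$ with $v_i^j\in\RR^{k_j}$ and Borel $E_i^j\subset U_j\cap\Omega$. Tracking $L^p$ norms through the reduction, the core case is a single chart $(U,\phi)$ together with $f=v\chi_E$ for a single $v\in\RR^k$ and Borel $E\subset U\cap\Omega$.

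For this core case I would run Alberti's iterative scheme, suitably adapted. Fix $\theta\in(0,1)$ small. Using the doubling property, choose a Whitney-type decomposition of $\Omega$ into balls $\{B_i\}$ with $\diam B_i\sim\dist(B_i,X\setminus\Omega)$, refined so that a small neighborhood of the Whitney skeleton meets $E$ in measure at most $\theta\mu(E)$. On each $B_i$ that meets $E$, the affine Lipschitz function $v\cdot(\phi-\phi(x_i))$ is multiplied by a Lipschitz cutoff supported in a concentric enlargement $B_i^*\subset\Omega$, and the resulting pieces are glued via a subordinate Lipschitz partition of unity. This produces $u_0\in C_0(\Omega)$ with $d^j u_0=f$ off a ``bad set'' $N_0\subset\Omega$ of measure at most $C\theta\mu(\Omega)$ and with pointwise Lipschitz constant controlled by $|v|\LIP(\phi)$. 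Setting $f_1:=f-d^j u_0$, which is supported in $N_0$ and pointwise $O(|v|)$, and applying the construction recursively on $N_0$ yields $u_n$ with bad set $N_n$ of measure $\le(C\theta)^n\mu(\Omega)$. For $\theta$ sufficiently small the series $u:=\sum_n u_n$ converges uniformly to a Lipschitz element of $C_0(\Omega)$ with $d^j u=f$ off $\bigcap_n N_n$, and stopping the iteration at an appropriate stage delivers \eqref{alberti1} and \eqref{alberti2}.

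The principal obstacle is the sharp $L^p$ bound \eqref{alberti3} with exponent $\tfrac{1}{p}-\tfrac{1}{\eta}$. In the Euclidean setting $\eta=k$ is the ambient dimension; in a general PI space, the correct $\eta$ should be the homogeneous dimension coming from a lower mass bound $\mu(B(x,r))/\mu(B(x,R))\gtrsim(r/R)^\eta$, a consequence of doubling together with the Poincar\'e-enforced connectedness of $X$. This exponent dictates, at each iterative stage, the trade-off between the measure of the bad set being corrected and the smallest Whitney scale available to tile it, and hence governs how $\|\Lip_{u_n}\|_p$ grows with $n$. Extending from a single $v\chi_E$ to a general simple $f_j$ requires a further stratification of $f_j$ by its dyadic level sets $\{|f_j|\sim 2^k\}$, combined with a Marcinkiewicz-style interpolation whose exponents are determined precisely by $\eta$; summing the resulting estimates and optimizing $\theta$ in terms of $\epsilon$ should produce \eqref{alberti3}, with \eqref{alberti4} following by letting $p\to\infty$. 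Carrying out this $L^p$ bookkeeping with doubling and Poincar\'e data alone, in the absence of any Euclidean structure to short-circuit the Whitney-to-measure trade-off, is where the bulk of the technical work will lie.
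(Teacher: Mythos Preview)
Your architecture---localize affine pieces $v\cdot(\phi_j-\phi_j(x_i))$ via cutoffs and iterate---is the right idea and matches the paper's basic building block. But you misidentify the exponent $\eta$, and this reveals a missing ingredient. You claim $\eta=k$ in $\RR^k$; in fact Alberti's bound in Theorem~\ref{albertioriginal} is $\epsilon^{1/p-1}$, so $\eta=1$ in every $\RR^k$. The exponent does \emph{not} come from a lower mass bound on balls. It comes from a \emph{small boundary} property: the $(tr)$-neighborhood of the boundary of a cube of scale $r$ has relative measure $\lesssim t^\eta$. In $\RR^k$ with standard dyadic cubes this holds with $\eta=1$ because cube boundaries have codimension one. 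In a general PI space, this property is supplied by Christ's dyadic cube construction (Proposition~\ref{cubes}(\ref{cubesboundary}) in the paper), which is the technical input you are missing. One chooses $t\sim\epsilon^{1/\eta}$ so the boundary layer has relative measure $\sim\epsilon$; the cutoffs $\psi_i$ then satisfy $\LIP(\psi_i)\lesssim\epsilon^{-1/\eta}r^{-1}$, and this is precisely the origin of the exponent in \eqref{alberti3}--\eqref{alberti4}. A Whitney-ball decomposition carries no analogous small-boundary guarantee, so the trade-off you propose between bad-set measure and Whitney scale, governed by the homogeneous dimension, does not produce the correct $\eta$ and your $L^p$ bookkeeping cannot close as stated.

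The paper's route also differs from yours in two structural respects. First, it does not reduce to a single chart: it proves a multiscale selection lemma (Lemma~\ref{multiscale}) producing disjoint Christ cubes at varying scales, each dominated in measure by a single chart $U_{j(T)}$, so all charts are handled simultaneously in one pass rather than combined after the fact. Second, it does not reduce to $v\chi_E$ or stratify by dyadic level sets; instead, on each cube $T_i$ it sets $a_i=\fint_{U_{j(i)}\cap T_i} f_{j(i)}$, uses uniform continuity (after a Lusin approximation, Lemma~\ref{francos}) to get $|f_j-d^ju|\le\alpha$ on a large set, and then iterates the tolerance $\alpha_n\to 0$ rather than iterating over shrinking bad sets. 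The $L^p$ bound then falls out directly from Jensen's inequality on the averages, with no interpolation needed.
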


As $u$ is Lipschitz, the Poincar\'e inequality (see Lemma \ref{PIconsequences}) will allow us to also control the global Lipschitz constant of $u$ in Theorem \ref{alberti}, and conclude that
$$ \LIP(u) \leq C\epsilon^{-\frac{1}{\eta}}\sup_{j\in J} ((\LIP (\phi_j))\|f_j\|_\infty),$$
if the right-hand side is finite.

That the bounds (\ref{alberti3}) and (\ref{alberti4}) involve the the chart functions $\phi_j$ is in some sense inevitable, as one can easily discover by looking at the measurable differentiable structure $(\RR, \phi(x)=2x)$ on $\RR$. Note also that, unlike in the Euclidean setting of Theorem \ref{albertioriginal}, the notion of $C^1$ regularity is not defined in PI spaces. Thus, the natural regularity for our constructed function $u$ in Theorem \ref{alberti} is Lipschitz.

Our second result is the analog in PI spaces of Theorem \ref{mporiginal}:

\begin{thm}\label{mpthm}
Let $(X,d,\mu)$ be a PI space. Let $\{(U_j, \phi_j\colon U_j\rightarrow\RR^{k_j})\}$ be a measurable differentiable structure on $X$. Let $\Omega \subset X$ be open, let $\epsilon>0$, and let $\{f_j\colon U_j\cap\Omega \rightarrow \RR^{k_j}\}$ be a collection of Borel functions.

Then there is a continuous function $u$ on X that is differentiable almost everywhere and satisfies
\begin{equation}\label{mpthm1}
\|u\|_\infty \leq \epsilon \text{ and } \{u\neq 0\}\subset \Omega,
\end{equation}
\begin{equation}\label{mpthm2}
d^j u = f_j  \text{ a.e. in } U_j \cap \Omega
\end{equation}
for each $j\in J$, and
\begin{equation}\label{mpthm3}
\Lip_u = 0 \text{ everywhere in } X\setminus \Omega.
\end{equation}
\end{thm}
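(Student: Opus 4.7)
Plan. The strategy is to mimic the reduction in Moonens--Pfeffer \cite{MP08}: iterate Theorem \ref{alberti} with summably small parameters to drive the exceptional set down to measure zero, then paste the resulting pieces together via a Whitney-type decomposition of $\Omega$. Of the three conclusions, only (\ref{mpthm3}) will force us to use such a decomposition; the other two would already follow from a single iteration on $\Omega$.

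First, for $V\subset X$ open and bounded with $\mu(V)<\infty$ and $\overline V\subset\Omega$, I claim there is a continuous $v$ with $\{v\neq 0\}\subset \overline V$, differentiable a.e., with $d^j v = f_j$ a.e.\ on $V\cap U_j$ for each $j$ and $\|v\|_\infty \leq \delta$ for any prescribed $\delta>0$. I construct $v$ iteratively: set $V_0 := V$, $g_j^{(0)}:=f_j|_V$, and having chosen $V_n, g_j^{(n)}$, apply Theorem \ref{alberti} to $(V_n, g_j^{(n)})$ with parameter $\epsilon_n=2^{-n}$ to obtain a Lipschitz $v_n \in C_0(V_n)$ and an open $A_n\subset V_n$ with $\mu(A_n)\leq 2^{-n}\mu(V_n)$ and $d^j v_n = g_j^{(n)}$ a.e.\ on $(V_n\setminus A_n)\cap U_j$. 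Then set $V_{n+1} := A_n$ and $g_j^{(n+1)}:=g_j^{(n)}-d^j v_n$ (which is Borel, since $d^j v_n$ is). The partial sum $S_n := \sum_{k\leq n} v_k$ satisfies $d^j S_n = f_j$ a.e.\ on $(V \setminus V_{n+1})\cap U_j$, while $\mu(\bigcap_n V_n)=0$. Because each $v_n\in C_0(V_n)$ is Lipschitz and $\mu(V_n)\to 0$, doubling forces $\diam V_n \to 0$, hence $\|v_n\|_\infty \leq \LIP(v_n)\cdot\diam V_n$ is summable (geometrically, after tuning $\epsilon_n$ using (\ref{alberti4})). The limit $v:=\sum v_n$ is the desired function; rescaling $\epsilon_0$ makes $\|v\|_\infty \leq \delta$.

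Now take a Whitney-type cover $\{B_i = B(x_i,r_i)\}$ of $\Omega$ with $r_i \leq \dist(B_i, X\setminus \Omega)/100$ and bounded overlap $\sum_i \chi_{2B_i}\leq N$ (such a cover exists in any doubling space by a standard argument). Apply the previous construction with $V=2B_i$ and input data $f_j\cdot\chi_{B_i}$ (extended by zero off $U_j$) to obtain $v_i$ supported in $\overline{2B_i}\subset\Omega$, with $d^j v_i = f_j$ a.e.\ on $B_i\cap U_j$, and $\|v_i\|_\infty \leq \delta_i$ for prescribed $\delta_i>0$. Set $u:=\sum_i v_i$ with $\delta_i := 2^{-i}\epsilon \min(1, r_i \alpha_i)$ where $\alpha_i \to 0$ as $r_i \to 0$ (for concreteness $\alpha_i = \sqrt{r_i}$). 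Uniform convergence gives $u\in C(X)$ with $\|u\|_\infty\leq\epsilon$ and $\{u\neq 0\}\subset\Omega$; a.e.\ $d^j u = f_j$ on $\Omega\cap U_j$ follows from bounded overlap and summing derivatives. For $x\in X\setminus \Omega$ and $y\in \Omega$ near $x$, only the $v_i$ with $y\in 2B_i$ contribute, and each such $i$ satisfies $r_i \leq d(x,y)/50$; bounded overlap and the choice of $\delta_i$ yield $|u(y)|\leq N\cdot o(d(x,y))$, i.e.\ $\Lip_u(x)=0$.

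The main obstacle is the final calibration of $\delta_i$ against the Whitney radii $r_i$: the function $u$ must vanish \emph{super-}linearly at $\partial\Omega$ to secure (\ref{mpthm3}), while still matching the prescribed derivative a.e.\ inside $\Omega$. This is the sole reason we cannot simply iterate Theorem \ref{alberti} once on all of $\Omega$: a direct iteration handles (\ref{mpthm1}) and (\ref{mpthm2}) but offers no control of $\Lip_u$ at boundary points of $\Omega$.
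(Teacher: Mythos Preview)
There are two genuine gaps.

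\textbf{Step 1 (the local construction).} The assertion ``$\mu(V_n)\to 0$, doubling forces $\diam V_n\to 0$'' is false. The exceptional set $A_{n-1}$ produced by Theorem~\ref{alberti} is merely an open set of small measure; it can be a union of many tiny pieces spread across all of $V$, so $\diam V_n$ need not shrink at all (already in $\mathbb{R}$: a union of $2^n$ intervals of length $2^{-2n}$ has diameter close to $1$). What you would actually control is the \emph{inradius} of $V_n$, via $\|v_n\|_\infty\le \LIP(v_n)\cdot\sup_{x\in V_n}\dist(x,\partial V_n)$, but even this does not save the argument: the bound (\ref{alberti4}) gives $\LIP(v_n)\lesssim \epsilon_n^{-1/\eta}\sup_j\|g_j^{(n)}\|_\infty$, and there is no a priori bound on $\sup_j\|g_j^{(n)}\|_\infty$ (the original $f_j$ are merely Borel, and even after truncation the residuals $g_j^{(n)}=g_j^{(n-1)}-d^jv_{n-1}$ can grow). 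In short, you have no mechanism to make $\|v_n\|_\infty$ summable.

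\textbf{Step 2 (the Whitney gluing).} With input data $f_j\chi_{B_i}$ you get $d^jv_i=f_j$ a.e.\ on $B_i\cap U_j$ and $d^jv_i=0$ a.e.\ on $(2B_i\setminus B_i)\cap U_j$. Since your Whitney balls $\{B_i\}$ overlap (they must, if they are to cover $\Omega$), a point $x$ lying in $B_{i_1}\cap\cdots\cap B_{i_m}$ receives $d^ju(x)=m\,f_j(x)$, not $f_j(x)$. Bounded overlap of the $2B_i$ controls how bad this is, but does not make it go away; you would need a partition-of-unity correction or a disjoint cover.

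The paper avoids both difficulties by organising the argument differently. The sup-norm control (your Step~1) is obtained not by iterating on shrinking exceptional sets, but by first chopping a compactly contained $\Omega'\subset\Omega$ into Christ cubes of small diameter $d$ and applying Theorem~\ref{alberti} once on each cube; since the resulting piece is Lipschitz and supported in a set of diameter $\le d$, its sup norm is $\le d\cdot\LIP$, and $d$ is a free parameter (Lemma~\ref{mplemma}). This lemma also builds in the quadratic decay $|u|\le\epsilon\min\{1,\dist^2(\cdot,X\setminus\Omega)\}$. The global step then iterates Lemma~\ref{mplemma} on the \emph{disjoint} sets $\Omega_i=\Omega\cap B(x_0,i)\setminus\bigcup_{k<i}K_k$, so no overlap issue arises and the quadratic decay yields (\ref{mpthm3}) directly.
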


In particular, Theorems \ref{alberti} and \ref{mpthm} allow one to prescribe the horizontal derivatives of functions on the Heisenberg group, or to prescribe the one-dimensional Cheeger derivatives of functions on the Laakso-type spaces of \cite{La00} and \cite{CK13_PI}.

\section{Definitions and Preliminaries}\label{sectiondefs}
We will work with metric measure spaces $(X,d,\mu)$ such that $(X,d)$ is complete and $\mu$ is a Borel regular measure. If the metric and measure are understood, we will denote such a space simply by $X$. An open ball in $X$ with center $x$ and radius $r$ is denoted $B(x,r)$. If $B=B(x,r)$ is a ball in $X$ and $\lambda>0$, we write $\lambda B = B(x,\lambda r)$.

We generally use $C$ and $C'$ to denote positive constants that depend only on the quantitative data associated to the space $X$ (see below); their values may change throughout the paper.

If $\Omega\subset X$ is open, we let $C_c(\Omega)$ denote the space of continuous functions with compact support in $\Omega$. We also let $C_0(\Omega)$ denote the completion of $C_c(\Omega)$ in the supremum norm. Any function in $C_0(\Omega)$ admits a natural extension by zero to a continuous function on all of $X$.

Recall that a real-valued function $u$ on a metric space $(X,d)$ is \textit{Lipschitz} if there is a constant $L\geq 0$ such that
$$ |u(x) - u(y)| \leq L d(x,y) \text{ for all } x,y\in X. $$
The infimum of all $L\geq 0$ such that the above inequality holds is called the \textit{Lipschitz constant} of $u$ and is denoted $\LIP (u)$.

Given a real-valued (not necessarily Lipschitz) function $u$ on $X$, we also define its pointwise upper Lipschitz constant at points $x\in X$ by
$$ \Lip_u (x) = \limsup_{r\rightarrow 0} \frac{1}{r} \sup_{d(x,y) < r} |u(y) - u(x)|. $$

Two basic facts about $\Lip$ are easy to verify. First, for any two functions $f$ and $g$,
\begin{equation}\label{Lipsum}
\Lip_{f+g}(x) \leq \Lip_f(x) + \Lip_g(x).
\end{equation}
Second, if $f$ and $g$ are Lipschitz functions, then
\begin{equation}\label{Lipprod}
\Lip_{fg}(x) \leq f(x)(\Lip_g(x)) + g(x)(\Lip_f(x)).
\end{equation}

A non-trivial Borel regular measure $\mu$ on a metric space $(X,d)$ is a \textit{doubling measure} if there is a constant $C>0$ such that $\mu(B(x,2r) \leq C\mu(B(x,r))$ for every ball $B(x,r)$ in $X$. The existence of a doubling measure $\mu$ on $(X,d)$ implies that $(X,d)$ is a \textit{doubling metric space}, i.e. that every ball can be covered by at most $N$ balls of half the radius, for some fixed constant $N$. In particular, a complete metric space with a doubling measure is \textit{proper}: every closed, bounded subset is compact. 

\begin{definition}
A metric measure space $(X,d,\mu)$ is a \textit{PI space} if $(X,d)$ is complete, $\mu$ is a doubling measure on $X$ and $(X,d,\mu)$ satisfies a ``$(1,q)$-Poincar\'e inequality'' for some $1\leq q < \infty$: There is a constant $C>0$ such that, for every compactly supported Lipschitz function $f \colon X\rightarrow\RR$ and every open ball $B$ in $X$,
$$\fint_B |f-f_B|d\mu \leq C(\diam B)\left(\fint_{CB} (\Lip_f)^q d\mu \right)^{1/q}.$$
(Here the notations $\fint_E g d\mu$ and $g_E$ both denote the average value of the function $g$ on the set $E$, i.e., $\frac{1}{\mu(E)}\int_E g d\mu$.)
\end{definition}
This definition can be found in \cite{Ke03_PI}; it is equivalent to other versions of the Poincar\'e inequality in metric measure spaces, such as the original one of \cite{HK98}. If $X$ is a PI space, then the collection of constants associated to the doubling property and Poincar\'e inequality on $X$ are known as the \textit{data of $X$}.

In addition to providing a differentiable structure (see below), the PI space property of $X$ will supply two other key facts for us, summarized in the following proposition.
\begin{prop}\label{PIconsequences}
Let $(X,d,\mu)$ be a PI space. Then there is a constant $C>0$, depending only on the data of $X$, such that the following two statements hold:
\begin{enumerate}[\normalfont (a)]
\item $X$ is \textit{quasiconvex}, meaning that any two points $x,y \in X$ can be joined by a rectifiable path of length at most $Cd(x,y)$.
\item For any bounded Lipschitz function $u$ on $X$, 
$$ \LIP (u) \leq C \| \Lip_u\|_\infty. $$
\end{enumerate}
\end{prop}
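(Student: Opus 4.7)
The plan is to prove (a) and (b) in order, with (a) being the substantive step and (b) a corollary.

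Part (a), quasiconvexity of PI spaces, is a classical theorem of Semmes (appendix of \cite{Ch99}; see also \cite{HK98}). My approach is the $\epsilon$-chain construction. Fix $x, y \in X$; for each $\epsilon > 0$, I would build a sequence $x = x_0, x_1, \ldots, x_N = y$ with $d(x_i, x_{i+1}) < \epsilon$ and $\sum_i d(x_i, x_{i+1}) \leq C d(x, y)$. The key input is the Poincar\'e inequality applied to the $1$-Lipschitz truncated distance $f(z) = \min(d(z, x), 2 d(x, y))$. This inequality forces averages of $f$ on small balls to interpolate between values close to $0$ (near $x$) and close to $d(x,y)$ (near $y$), from which a Vitali-type covering argument at each scale extracts chain points with controlled cumulative length. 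Sending $\epsilon \to 0$ and invoking properness (from doubling and completeness) via Arzel\`a-Ascoli yields a rectifiable arc of length at most $C d(x, y)$. The main obstacle is making the multi-scale chaining/covering argument quantitative; this is the technical core of the proof.

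For part (b), set $L = \|\Lip_u\|_\infty$ and fix $x, y \in X$. Using (a), select an arc-length parametrized rectifiable curve $\gamma : [0, \ell] \to X$ from $x$ to $y$ with $\ell \leq C d(x, y)$. Since $\gamma$ is $1$-Lipschitz and $u$ is Lipschitz, the composition $u \circ \gamma$ is Lipschitz on $[0, \ell]$ and hence differentiable almost everywhere, with $|(u \circ \gamma)'(t)| \leq \Lip_u(\gamma(t))$ wherever the derivative exists. The subtlety is that $L$ is an essential supremum, so $\Lip_u$ may exceed $L$ on a $\mu$-null set $N$ that $\gamma$ could a priori traverse with positive arc length. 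To circumvent this, one selects $\gamma$ from a Fuglede-generic family of curves (avoiding null sets in the $q$-modulus sense available in PI spaces), giving $\Lip_u(\gamma(t)) \leq L$ for a.e. $t$ and hence $|u(x) - u(y)| \leq \int_0^\ell \Lip_u(\gamma(t))\,dt \leq \ell L \leq C d(x, y) L$. A clean alternative that bypasses curves is the Haj\l asz-Koskela telescoping argument: at any two Lebesgue points of $u$, chain together ball-averages $u_{B(\cdot, 2^{-k} d(x, y))}$ via the $(1,q)$-Poincar\'e inequality, each increment controlled by $L$, and sum the resulting geometric series. Either approach gives $\LIP(u) \leq C L$.
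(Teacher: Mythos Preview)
Your sketch is correct and outlines the standard arguments. Note, however, that the paper does not actually prove this proposition: it simply cites Theorem~17.1 of \cite{Ch99} for part~(a) and Theorem~4.7 of \cite{DJS12} for part~(b). Your outline for (a) is precisely the Semmes $\epsilon$-chain argument that appears in the appendix of \cite{Ch99}, and your two approaches for (b)---the curve integration with a Fuglede-generic curve to handle the essential-supremum issue, and the Haj\l asz--Koskela telescoping of ball averages---are both standard routes to the cited result. So there is no substantive divergence between your approach and the literature the paper defers to; you have simply supplied the content where the paper gives only pointers. One minor remark: in the curve approach you correctly flag the null-set obstacle, but in a PI space an even cleaner fix is available, since $\Lip_u$ agrees $\mu$-a.e.\ with the minimal weak upper gradient, and the upper-gradient inequality along \emph{every} rectifiable curve then follows; your Fuglede/telescoping alternatives are equally valid.
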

\begin{proof}
The first statement can be found in Theorem 17.1 of \cite{Ch99}. The second can be found (in greater generality than we need here) in \cite{DJS12}, Theorem 4.7.
\end{proof}

The following definition is essentially due to Cheeger, in Section 4  of \cite{Ch99}. The form we state can be found in Definition 2.1.1 of \cite{Ke03} (see also \cite{BS13}). The notation $\langle \cdot, \cdot \rangle$ denotes the standard inner product on Euclidean space of the appropriate dimension.
\begin{definition}\label{diffstructure}
Let $(X,d,\mu)$ be a metric measure space. Let $\{U_j\}_{j\in J}$ be a collection of pairwise disjoint measurable sets covering $X$, let $\{k_j\}_{j\in J}$ be a collection of non-negative integers, and let $\{\phi_j\colon X\rightarrow \RR^{k_j}\}_{j\in J}$ be a collection of Lipschitz functions.

We say that the collection $\{(U_j, \phi_j)\}$ forms a \textit{measurable differentiable structure} for $X$ if the following holds: For every Lipschitz function $u$ on $X$ and every $j\in J$, there is a Borel measurable function $d^j u \colon  U_j \rightarrow \RR^{k_j}$ such that, for almost every $x\in U_j$,
\begin{equation}\label{differentiable}
\lim_{y\rightarrow x} \frac{|u(y)-u(x) - \langle d^ju(x) , (\phi_j(y) - \phi_j(x)) \rangle|}{d(y,x)} = 0.
\end{equation}
Furthermore, the function $d^j u$ should be unique (up to sets of measure zero).
\end{definition}
We call each pair $(U_j, \phi_j)$ a \textit{chart} for the differentiable structure on $X$. For more background on differentiable structures (also called ``strong measurable differentiable structures'' and ``Lipschitz differentiability spaces'') see \cite{Ke03, Ba13}.

Note that the defining property (\ref{differentiable}) for a measurable differentiable structure can be more succinctly rephrased as
$$ \Lip_{u - \langle d^ju(x) , \phi_j \rangle}(x) = 0.$$

The link between PI spaces and measurable differentiable structures is given by the following theorem of Cheeger, one of the main results of \cite{Ch99}. (See also \cite{Ke03, KM11, Ba13} for alternate approaches.)

\begin{thm}[\cite{Ch99}, Theorem 4.38]\label{Cheegertheorem}
Every PI space $X$ supports a measurable differentiable structure
$$\{(U_j, \phi_j\colon X\rightarrow\RR^{k_j})\},$$
and the dimensions $k_j$ of the charts $U_j$ are bounded by a uniform constant depending only on the constants associated to the doubling property and Poincar\'e inequality of $X$.
\end{thm}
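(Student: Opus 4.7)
The plan is to construct the charts by a maximal-dimension argument that leverages the Poincar\'e inequality to detect ``infinitesimal independence'' of Lipschitz functions. The main conceptual ingredient is that on a PI space the pointwise constant $\Lip_u$ behaves like the length of a gradient: the Poincar\'e inequality forces $\Lip_u$ to be, up to constants, the minimal (generalized) upper gradient of $u$, so that $\Lip_{\langle a,\phi\rangle}(x)=0$ is a genuine condition of $a$ being in the ``null direction'' of $\phi$ at $x$.

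First I would introduce the notion of a \emph{$k$-chart}: a measurable set $U\subset X$ of positive measure together with a Lipschitz map $\phi=(\phi_1,\dots,\phi_k)\colon X\to\RR^k$ such that for every $a\in\RR^k\setminus\{0\}$ the set $\{x\in U:\Lip_{\langle a,\phi\rangle}(x)=0\}$ has measure zero. Using a Zorn/exhaustion argument and the a priori dimension bound (see below) I would produce a countable collection of pairwise disjoint measurable sets $\{U_j\}$ covering $X$ up to a null set, each carrying a chart $(U_j,\phi_j)$ of \emph{maximal} dimension $k_j$, in the sense that no Lipschitz function $\psi\colon X\to\RR$ is infinitesimally independent of $\phi_j$ on any positive-measure subset of $U_j$.

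The main difficulty, and the heart of Cheeger's theorem, is verifying the differentiability property (\ref{differentiable}) on each maximal chart. Given a Lipschitz $u$ and a chart $(U_j,\phi_j)$, I would define the candidate derivative $a(x)=d^j u(x)$ at a Lebesgue density point $x\in U_j$ by the following blow-up / tangent scheme: look at the family of rescalings of $u$ and $\phi_j$ at $x$ on balls $B(x,r)$ with $r\to 0$; use the Poincar\'e inequality plus doubling to get equicontinuity and extract a tangent affine function of $\phi_j$; read off $a(x)$ from its coefficients. Maximality of the chart then forces the residual $u-\langle a(x),\phi_j\rangle$ to satisfy $\Lip_{u-\langle a(x),\phi_j\rangle}(x)=0$, because otherwise one could, by a measurable selection applied on a positive-measure subset, adjoin $u$ to $\phi_j$ and obtain a $(k_j+1)$-chart on that subset, contradicting maximality. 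Uniqueness of $a(x)$ up to null sets follows from the independence condition defining a chart, and Borel measurability of $a(\cdot)$ from the standard argument approximating $u$ by finite-dimensional linear combinations and passing to limits.

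The uniform bound on $k_j$ is the last, and in some ways the most rigid, step: the Lipschitz chart $\phi_j\colon X\to\RR^{k_j}$ restricted to a density point where $\phi_j$ is infinitesimally non-degenerate gives a bi-Lipschitz-like lower bound on how $\phi_j$ spreads balls, and the image of $U_j\cap B(x,r)$ in $\RR^{k_j}$ must then carry a doubling measure with $k_j$-dimensional support, whose doubling constant is controlled by that of $\mu$. A standard comparison of the doubling constants bounds $k_j$ by a function of the PI data alone. I expect the tangent-function step producing $d^ju(x)$ to be the principal obstacle, since it is where the full Sobolev machinery of the PI space (existence of minimal upper gradients, Lebesgue differentiation for Borel maps into $\RR^k$, and Poincar\'e-based equicontinuity of rescalings) must all be deployed together.
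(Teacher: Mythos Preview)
The paper does not prove this theorem. Theorem \ref{Cheegertheorem} is stated with attribution to Cheeger \cite{Ch99}, Theorem 4.38, and the paper simply cites it (together with the alternate approaches of \cite{Ke03, KM11, Ba13}) without giving any argument. There is therefore no proof in the paper to compare your proposal against.

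That said, your sketch is a reasonable high-level outline of the strategy that Cheeger and, in a cleaner form, Keith \cite{Ke03} follow: defining independence of Lipschitz functions via nonvanishing of $\Lip_{\langle a,\phi\rangle}$, passing to maximal charts by an exhaustion argument, and bounding the chart dimension by the doubling data. Two caveats are worth flagging. First, your description of how $d^j u(x)$ is produced (``extract a tangent affine function of $\phi_j$; read off $a(x)$'') glosses over the most delicate point: one does not directly blow up and get an affine limit, but rather shows that the failure of differentiability on a positive-measure set would let one adjoin $u$ to the chart, contradicting maximality. The derivative $a(x)$ is then obtained by a measurable selection from the set of vectors $a$ minimizing $\Lip_{u-\langle a,\phi_j\rangle}(x)$, not from a tangent limit. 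Second, your dimension-bound argument via ``bi-Lipschitz-like lower bounds'' on $\phi_j$ is not how it is done; the actual bound comes from the finite dimensionality of the space of generalized linear functions modulo those with vanishing minimal upper gradient, which is controlled by the doubling and Poincar\'e constants through an inequality comparing $\Lip$ and the minimal upper gradient. These are real gaps in your outline, but since the paper itself offers no proof, they are gaps relative to the literature rather than to the present paper.
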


If $X$ supports a measurable differentiable structure, then it generally supports many other equivalent ones. For example, the sets $U_j$ may be decomposed into measurable pieces or the functions $\phi_j$ rescaled without altering the properties in Definition \ref{diffstructure}. At times, it will be helpful to assume certain extra properties of the charts. 
\begin{definition}\label{normalized}
A measurable differentiable structure $\{(U_j, \phi_j\colon X\rightarrow\RR^{k_j})\}_{j\in J}$ is \textit{normalized} if, for each $j\in J$, there exists $c_j>0$, such that
\begin{equation}\label{normalized1}
U_j \text{ is closed,}
\end{equation}
\begin{equation}\label{normalized2}
\LIP(\phi_j)=1 \text{, and}
\end{equation}
\begin{equation}\label{normalized3}
|d^j u(x)| \leq c_j \Lip_u(x) \text{ whenever } u \text{ is differentiable at } x\in U_j.
\end{equation}
\end{definition}

The definition of a normalized chart is a minor modification of the notion of a ``structured chart'', due to Bate (\cite{Ba13}, Definition 3.6). The following lemma, essentially due to Bate, says that a given chart structure on $X$ can always be normalized by rescaling and chopping.
\begin{lemma}\label{batelemma}
Let $X$ be a PI space and let $\{(U_j, \phi_j\colon X\rightarrow\RR^{k_j})\}_{j\in J}$ be a measurable differentiable structure on $X$. Then there exists a collection of sets $\{U_{j,k}\}_{j\in J, k\in K_j}$ such that
\begin{itemize}
\item each set $U_{j,k}$ is contained in $U_j$ and
\item $\{(U_{j,k}, (\LIP(\phi_j))^{-1} \phi_j)\}_{j\in J, k\in K_j}$ is a normalized measurable differentiable structure on $X$.
\end{itemize}
\end{lemma}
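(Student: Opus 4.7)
The plan is to achieve the normalization conditions (\ref{normalized1})--(\ref{normalized3}) in three stages: rescale the chart maps, partition each $U_j$ into Borel pieces on which (\ref{normalized3}) holds with a uniform constant, and then refine those pieces to closed sets via inner regularity of $\mu$. For the rescaling, when $k_j \geq 1$ (and hence $\LIP(\phi_j) > 0$, since otherwise $\phi_j$ is constant and the chart is degenerate), set $\tilde\phi_j = (\LIP(\phi_j))^{-1}\phi_j$ so that $\LIP(\tilde\phi_j) = 1$; the case $k_j = 0$ needs no change. A direct check using (\ref{differentiable}) confirms that $\{(U_j, \tilde\phi_j)\}$ remains a differentiable structure, with new derivative $\tilde d^j u(x) = \LIP(\phi_j) d^j u(x)$.

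The critical step is establishing a pointwise bound $|\tilde d^j u(x)| \leq c(x)\Lip_u(x)$. To do this, I would rely on the fact, which is the content of Bate's work \cite{Ba13} (or the Cheeger--Keith reformulation), that for almost every $x \in U_j$, the seminorm
$$v \mapsto \Lip_{\langle v, \tilde\phi_j\rangle}(x)$$
on $\RR^{k_j}$ is in fact a norm. Equivalence of norms on $\RR^{k_j}$ then produces a function
$$h_j(x) := \inf\bigl\{\Lip_{\langle v, \tilde\phi_j\rangle}(x) : v \in \RR^{k_j},\ |v| = 1\bigr\}$$
that is positive a.e.\ on $U_j$ and satisfies $|v| \leq h_j(x)^{-1}\Lip_{\langle v, \tilde\phi_j\rangle}(x)$ for every $v$. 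Applying this with $v = \tilde d^j u(x)$ and invoking the identity $\Lip_u(x) = \Lip_{\langle \tilde d^j u(x), \tilde\phi_j\rangle}(x)$ (a consequence of (\ref{differentiable}) and the sum bound (\ref{Lipsum})) gives the required $|\tilde d^j u(x)| \leq h_j(x)^{-1}\Lip_u(x)$.

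Measurability of $h_j$ follows from the observation that $v \mapsto \Lip_{\langle v, \tilde\phi_j\rangle}(x)$ is $1$-Lipschitz in $v$ (since $\LIP(\tilde\phi_j) = 1$), which allows the infimum to be taken over a countable dense subset of the unit sphere and expresses $h_j$ as a countable infimum of Borel functions. Defining the disjoint Borel pieces $V_j^m := \{h_j > 1/m\} \setminus \{h_j > 1/(m-1)\}$ for $m \geq 1$ partitions $U_j$ up to a null set, and on $V_j^m$ one may take $c_{j,m} = m$. Since $\mu$ is Radon on the complete doubling space $X$, inner regularity decomposes each $V_j^m$ further into a countable disjoint union of closed sets plus a null remainder; absorbing the accumulated null set into a single chart (which does not affect any almost-everywhere assertion in Definition \ref{normalized}) yields the desired family $\{U_{j,k}\}_{k \in K_j}$.

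The principal obstacle is the norm, rather than merely seminorm, property of $v \mapsto \Lip_{\langle v, \tilde\phi_j\rangle}(x)$, since this is what makes the uniform bound (\ref{normalized3}) possible at all. This is precisely the substance of the structured chart theory, and once it is in hand the rest of the argument is organized measure-theoretic bookkeeping.
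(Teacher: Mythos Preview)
Your proposal is correct and follows the same three-step approach as the paper: rescale the chart maps to have Lipschitz constant one, decompose each $U_j$ into pieces on which (\ref{normalized3}) holds with a uniform constant, and refine to closed sets by inner regularity. The only difference is that the paper simply cites Bate's Lemma 3.4 as a black box for the second step, whereas you unpack that lemma's content (the a.e.\ norm property of $v \mapsto \Lip_{\langle v, \tilde\phi_j\rangle}(x)$, the measurable function $h_j$, and the superlevel-set partition), which is a faithful sketch of what that lemma actually does.
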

\begin{proof}
By Lemma 3.4 of \cite{Ba13}, we can decompose each chart $U_j$ into charts $U_{j,k}$ such that the measurable differentiable structure $\{(U_{j,k}, \phi_j)\}$ satisfies (\ref{normalized3}). 

As $(\LIP(\phi_j))^{-1} \phi_j)$ is just a rescaling of $\phi_j$, the chart $(U_{j,k}, (\LIP(\phi_j))^{-1} \phi_j)$ still possesses property (\ref{normalized3}) (with a different constant $c_{j,k}$).

As a final step, we decompose each $U_{j,k}$ into closed sets, up to measure zero, while maintaining the same chart functions.
\end{proof}

For technical reasons, it will be convenient in the proofs of Theorems \ref{alberti} and \ref{mpthm} that the measurable differentiable structure is normalized. That this can be done without loss of generality is the content of the following simple lemma.
\begin{lemma}\label{WLOGlemma}
To prove Theorems \ref{alberti} and \ref{mpthm}, we can assume without loss of generality that the measurable differentiable structure $\{(U_j,\phi_j)\}$ is normalized.
\end{lemma}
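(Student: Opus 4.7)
The plan is to start with an arbitrary measurable differentiable structure $\{(U_j,\phi_j)\}_{j\in J}$ and a family of Borel target functions $\{f_j\}$, replace them by a normalized structure $\{(U_{j,k},\tilde\phi_{j,k})\}_{j\in J, k\in K_j}$ produced by Lemma \ref{batelemma} together with rescaled target data $\{\tilde f_{j,k}\}$, apply the assumed (normalized) version of Theorem \ref{alberti} or \ref{mpthm} there, and pull the conclusion back through the rescaling. Here $\tilde\phi_{j,k} = \LIP(\phi_j)^{-1}\phi_j$, and each $U_{j,k}$ is a closed subset of $U_j$; the sets $\{U_{j,k}\}_{k\in K_j}$ cover $U_j$ up to a null set. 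The key observation to record is the reciprocal scaling rule for Cheeger derivatives: if a Lipschitz function $u$ is differentiable at $x\in U_{j,k}$ in the original chart, then rewriting (\ref{differentiable}) with $\phi_j$ replaced by $\LIP(\phi_j)\tilde\phi_{j,k}$ shows that $u$ is also differentiable there in the normalized chart, with $\tilde d^{j,k}u(x) = \LIP(\phi_j)\, d^j u(x)$.

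Given this rule, I would set $\tilde f_{j,k} := \LIP(\phi_j)\, f_j\big|_{U_{j,k}\cap\Omega}$ (Borel, since $U_{j,k}$ is closed and $f_j$ is Borel) and feed $\{\tilde f_{j,k}\}$ into the normalized version of the theorem being proved. The a.e.\ identity $\tilde d^{j,k}u = \tilde f_{j,k}$ on $U_{j,k}\cap(\Omega\setminus A)$ (resp.\ on $U_{j,k}\cap\Omega$ for Theorem \ref{mpthm}) translates, via the scaling rule, to $d^j u = f_j$ on the same set; taking the union over $k\in K_j$ recovers conditions (\ref{alberti2}) and (\ref{mpthm2}) for the original structure. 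The remaining conclusions of the two theorems that make no reference to the charts — namely (\ref{alberti1}), (\ref{mpthm1}) and (\ref{mpthm3}) — pass through verbatim.

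For the quantitative bounds (\ref{alberti3}) and (\ref{alberti4}), I would use $\LIP(\tilde\phi_{j,k})=1$ and $|\tilde f_{j,k}| = \LIP(\phi_j)|f_j|$ on $U_{j,k}\cap\Omega$: summing $\LIP(\tilde\phi_{j,k})^p\int_{U_{j,k}\cap\Omega}|\tilde f_{j,k}|^p$ first over $k$ and then over $j$ telescopes to $\sum_j \LIP(\phi_j)^p\int_{U_j\cap\Omega}|f_j|^p$, which is exactly the right-hand side of (\ref{alberti3}); the $L^\infty$ calculation for (\ref{alberti4}) is analogous and uses that $\|\tilde f_{j,k}\|_\infty\le\LIP(\phi_j)\|f_j\|_\infty$.

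I do not foresee a real obstacle. The whole argument is bookkeeping around a single invariance observation — that rescaling a chart function by a positive constant rescales the Cheeger derivative by the reciprocal — which is tautological from Definition \ref{diffstructure}. The only thing to be mildly careful about is that Lemma \ref{batelemma} covers each $U_j$ only up to a null set, but this is harmless because the target conclusions in both theorems are stated almost everywhere.
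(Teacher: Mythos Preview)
Your proposal is correct and follows essentially the same route as the paper: pass to the normalized structure of Lemma \ref{batelemma}, rescale the target data by $\LIP(\phi_j)$, and apply the normalized case. You in fact supply more detail than the paper does --- the scaling rule $\tilde d^{j,k}u=\LIP(\phi_j)\,d^ju$ and the telescoping check of (\ref{alberti3})--(\ref{alberti4}) --- where the paper simply asserts that the conclusions follow ``immediately''.
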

\begin{proof}
Assume that we can prove Theorems \ref{alberti} and \ref{mpthm} if the charts involved are normalized.

Suppose $(U_j, \phi_j)$ is an arbitrary (not necessarily normalized) measurable differentiable structure on $X$. Let $\Omega \subset X$ be open with $\mu(\Omega)<\infty$, let $\{f_j\colon U_j \cap \Omega \rightarrow \RR^{k_j}\}_{j\in J}$ be a collection of Borel functions, and let $\epsilon>0$.

By Lemma \ref{batelemma}, there is a normalized measurable differentiable structure 
\begin{equation}\label{newstructure}
\{(U_{j,k}, (\LIP(\phi_j))^{-1} \phi_j)\}_{j\in J, k\in K_j}
\end{equation}
on $X$, where each $U_{j,k}$ is contained in $U_j$.

Let $g_{j,k} = (\LIP \phi_j) f_j$. Apply Theorem \ref{alberti} to the normalized measurable differentiable structure (\ref{newstructure}), with the functions $g_{j,k}$ and the same parameter $\epsilon$. We immediately obtain an open set $A\subset \Omega$ and a Lipschitz function $u\in C_0(\Omega)$ that satisfy all four requirements of Theorem \ref{alberti}.

A similar argument applies to reduce Theorem \ref{mpthm} to the normalized case.
\end{proof}

The original arguments of \cite{Al91} and \cite{MP08} to prove Theorems \ref{albertioriginal} and \ref{mporiginal} use the dyadic cube decomposition of Euclidean space. We will use the analogous decomposition in arbitrary doubling metric spaces provided by a result of Christ \cite{Ch90}.

\begin{prop}[\cite{Ch90}, Theorem 11]\label{cubes}
Let $(X,d,\mu)$ be a doubling metric measure space. Then there exist constants $c\in (0,1)$, $\eta>0$, $a_0>0$, $a_1>0$, and $C_1>0$ such that for each $k\in \mathbb{Z}$ there is a collection $\Delta_k = \{Q^k_i : i \in I_k\}$ of disjoint open subsets of $X$ with the following properties:
\begin{enumerate}[\normalfont (i)]
\item \label{cubesfull} For each $k\in \mathbb{Z}$, $\mu(X \setminus \cup_i Q^k_i) = 0$.
\item \label{cubesball} For each $k\in \mathbb{Z}$ and $i\in I_k$, there is a point $z^k_i\in Q^k_i$ such that
$$ B(z^k_i, a_0 c^k) \subset Q^k_i \subset B(z^k_i, a_1 c^k). $$
\item \label{cubesboundary} For each $k\in \mathbb{Z}$ and $i\in I_k$, and for each $t>0$,
$$ \mu\left( \{ x\in Q^k_i : \dist(x, X\setminus Q^k_i) \leq tc^k \} \right) \leq C_1 t^\eta \mu(Q^k_i). $$
\item \label{cubesdisjoint} If $\ell \geq k$, $Q\in \Delta_\ell$, and $Q'\in \Delta_k$, then either $Q\subset Q'$ or $Q\cap Q'=\emptyset$.
\item \label{cubesnested} For each $k\in \mathbb{Z}$, each $i\in I_k$, and each integer $\ell< k$ there is a unique $j\in I_\ell$ such that $Q^k_i \subset Q^\ell_j$.
\end{enumerate}
\end{prop}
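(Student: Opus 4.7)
The plan is to construct the cubes $Q^k_i$ from a nested hierarchy of maximal separated nets, following Christ's original approach. First, fix a small parameter $c\in (0,1)$. For each $k\in\mathbb{Z}$, choose a maximal $c^k$-separated subset $\{z^k_i\}_{i\in I_k}$ of $X$; by maximality, the balls $B(z^k_i, c^k)$ cover $X$ while the balls $B(z^k_i, c^k/3)$ are pairwise disjoint. Next, organize these centers into a tree: to each $z^k_i$, assign a ``parent'' $z^{k-1}_{p(i)}$ with $d(z^k_i, z^{k-1}_{p(i)}) < c^{k-1}$, which exists by the covering property of the coarser net. The cube $Q^k_i$ is then defined, roughly, as the set of points $x \in X$ that descend to $z^k_i$ through the tree at every finer scale, where at each scale one uses nearest-center assignment (breaking ties arbitrarily). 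Equivalently, one can build $Q^k_i$ from below, as the union of all $Q^{k+\ell}_j$ whose center $z^{k+\ell}_j$ has $z^k_i$ as its $\ell$-th ancestor.

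From this construction, most of the listed properties fall out directly. Property (\ref{cubesball}) follows by taking $a_0 \approx c/3$ from the separation of the net, and $a_1 \approx 1/(1-c)$ from the geometric series bounding how far a descendant $z^{k+\ell}_j$ can drift from $z^k_i$. The full-measure property (\ref{cubesfull}) reduces to showing that the boundary of each cube has measure zero, which in turn follows from the doubling property and the ``Voronoi-like'' definition (one can arrange that boundaries form a measure-zero set by choosing the tie-breaking rule appropriately). The nesting properties (\ref{cubesdisjoint}) and (\ref{cubesnested}) are immediate from the tree structure.

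The main obstacle — and by far the most delicate ingredient — is the small boundary property (\ref{cubesboundary}). The naive construction above does not give any quantitative control on the measure of the ``$tc^k$-annulus'' near $\partial Q^k_i$, because two nearby descendants can be split between neighboring cubes in a very uneven way. The fix is to design the construction iteratively across scales, reassigning each cube at scale $k$ so that it stays a definite distance from the potential boundary of its parent at scale $k-1$. At every rescale, the doubling property bounds the measure of the ``reassigned'' buffer layer by a constant multiple of the parent's measure, and iterating across scales $\ell \geq k$ converts this into a geometric sum whose total contribution is $O(t^\eta)$, where $\eta$ depends only on the doubling constant of $\mu$. Making this iteration simultaneously preserve (\ref{cubesfull})--(\ref{cubesnested}) is the delicate bookkeeping and is where I would concentrate most of the effort.

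In practice, since the result is exactly Theorem 11 of \cite{Ch90} (with later refinements in the metric setting), I would not reprove it from scratch but would simply cite it, noting that the precise statement above — with its particular constants and its quantified small-boundary exponent $\eta$ — is the standard form needed later in the paper.
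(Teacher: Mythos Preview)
The paper does not prove this proposition at all; it is simply quoted as Theorem~11 of \cite{Ch90} and used as a black box. Your final paragraph therefore matches the paper's treatment exactly, and the sketch you give of Christ's construction, while a reasonable outline of the original argument, goes beyond anything the paper itself supplies.
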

We refer to the elements of any $\Delta_k$ as cubes.

The next lemma is one of the primary differences between our proof of Theorem \ref{alberti} and the proof of Theorem \ref{albertioriginal} from \cite{Al91}. It allows us to replace a single-scale argument in \cite{Al91} by an argument that uses multiple scales simultaneously, which will allow us to deal with the presence of multiple charts.
\begin{lemma}\label{multiscale}
Let $(X,d,\mu)$ be a complete doubling metric measure space. Suppose that $\mu(X \setminus \cup_{j\in J} U_j)=0$, where the sets $U_j$ are disjoint and measurable. Fix $\gamma>0$ and positive numbers $\{\delta_j\}_{j\in J}$. Then we can find a collection $\mathcal{T}$ of pairwise disjoint cubes in $X$ (of possibly different scales) such that the following conditions hold:
\begin{enumerate}[(i)]
\item \label{multiscale1} $\mu(X\setminus \bigcup_{T\in \mathcal{T}} T) = 0$.
\item \label{multiscale2} There is a map $j\colon \mathcal{T}\rightarrow J$ such that
\begin{equation}\label{good1}
\mu(U_{j(T)} \cap T) \geq (1-\gamma)\mu(T)
\end{equation}
and
\begin{equation}\label{good2}
 \diam T < \delta_{j(T)}
\end{equation}
for each $T\in \mathcal{T}$.
\end{enumerate}
\end{lemma}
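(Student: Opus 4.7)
The plan is to perform a greedy stopping-time decomposition along the Christ cubes, keeping a cube as soon as it becomes ``good'' and otherwise subdividing it. The crucial input is a cubewise form of the Lebesgue differentiation theorem: for almost every $x\in X$, the unique $U_j$ containing $x$ has density one at $x$ measured along the nested Christ cubes, and the diameters of those cubes tend to zero.

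Concretely, I would call a cube $Q\in \bigcup_k \Delta_k$ \emph{good} if there exists $j\in J$ with $\mu(U_j\cap Q)\geq (1-\gamma)\mu(Q)$ and $\diam Q<\delta_j$, and for any such cube fix a choice $j(Q)\in J$. Then I would fix an arbitrary starting scale $k_0\in\mathbb{Z}$ and define $\mathcal{T}$ by a greedy algorithm: at scale $k_0$, place every good cube into $\mathcal{T}$ and mark every non-good cube for further subdivision; at each subsequent scale $k>k_0$, apply the same rule to every cube contained in a previously subdivided one. By the nesting property (\ref{cubesdisjoint}), the resulting collection $\mathcal{T}$ is automatically pairwise disjoint, and by construction every $T\in\mathcal{T}$ is good, which gives (\ref{multiscale2}) together with the map $T\mapsto j(T)$.

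To obtain (\ref{multiscale1}), I would invoke the Lebesgue differentiation theorem (valid on any complete doubling metric measure space) applied to the indicator functions $\chi_{U_j}$. For $\mu$-almost every $x\in X$ there is a unique $j=j(x)$ with $x\in U_j$ and $\mu(B(x,r)\setminus U_j)/\mu(B(x,r))\to 0$ as $r\to 0$. Let $Q^k_i\in\Delta_k$ denote the unique cube of scale $k$ containing $x$. Combining $Q^k_i\subset B(z^k_i,a_1 c^k)\subset B(x,2a_1 c^k)$ with the doubling comparison $\mu(B(x,2a_1 c^k))\leq C'\mu(B(z^k_i,a_0 c^k))\leq C'\mu(Q^k_i)$ that follows from property (\ref{cubesball}), I obtain
\[ \frac{\mu(Q^k_i\setminus U_{j(x)})}{\mu(Q^k_i)} \leq C' \frac{\mu(B(x,2a_1 c^k)\setminus U_{j(x)})}{\mu(B(x,2a_1 c^k))} \to 0 \]
as $k\to\infty$, while simultaneously $\diam Q^k_i\leq 2a_1 c^k\to 0$. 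Hence for all sufficiently large $k$, the cube $Q^k_i$ is good with label $j(x)$, so the stopping-time procedure has terminated in an ancestor of $x$ lying in $\mathcal{T}$ at or before that scale. This places almost every $x$ in some $T\in\mathcal{T}$, yielding (\ref{multiscale1}).

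The only genuine subtlety is the transfer from ball-based Lebesgue density to cube-based density: the centers $z^k_i$ shift with $k$, so one has to replace $Q^k_i$ by a ball centered at $x$ via the triangle inequality and then absorb the enlargement through a bounded number of iterated doublings. Once that comparison is set up, the rest is a routine stopping-time argument built entirely on properties (\ref{cubesfull})--(\ref{cubesnested}) of the Christ decomposition.
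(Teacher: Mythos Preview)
Your proposal is correct and follows essentially the same stopping-time argument as the paper: select the maximal ``good'' Christ cubes below a fixed starting scale, and use Lebesgue density of the sets $U_j$ to guarantee that almost every point is eventually captured. The only difference is cosmetic---you spell out the ball-to-cube density transfer via (\ref{cubesball}) and doubling, whereas the paper invokes ``$x$ is a $\mu$-density point of $U_{j_0}$'' and leaves that comparison implicit.
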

\begin{proof}
Let us call a cube $T\in \Delta_k$ ``good for $j$'' if it satisfies (\ref{good1}) and (\ref{good2}) with $j(T)=j$, and let us call $T$ ``good'' if it is good for some $j\in J$. Finally, let us call $T$ ``bad'' if it is not good. Write $\Delta^g_k$ for the sub-collection of $\Delta_k$ consisting of good cubes.

We then define our collection of cubes $\mathcal{T}$ to be
$$ \mathcal{T} = \bigcup_{k=1}^\infty \{ T\in \Delta^g_k : \text{for every $1\leq k'< k$ and every $Q\in \Delta_{k'}$ containing $T$, $Q$ is bad}\}.$$
In other words, our collection consists of all cubes that are the first good cube among all their ancestors of scales below $1$. Note that any two distinct cubes in $\mathcal{T}$ are disjoint: if not, then one would contain the other, forcing the larger one to be bad.

For each cube $T$ in this collection $\mathcal{T}$, define $j(T)$ to be a choice of $j\in J$ such that $T$ is good for $j$. The collection $\mathcal{T}$ and the map $j:\mathcal{T}\rightarrow J$ then automatically satisfy condition (\ref{multiscale2}) of the Lemma.

To verify (\ref{multiscale1}), we show that almost every point $x\in X$ is contained in one of the cubes $T\in \mathcal{T}$. Let 
$$ Z = X\setminus \bigcap_{k\in\mathbb{Z}} \bigcup_{\ell \in I_k} Q^k_\ell, $$
so that $\mu(Z)=0$ by Proposition \ref{cubes} (\ref{cubesfull}).

Let $x\in X\setminus Z$ be a point of $\mu$-density of some $U_{j_0}$. We claim that $x\in T$ for some $T\in \mathcal{T}$. Suppose, to the contrary, that $x\notin T$ for any $T\in \mathcal{T}$. Then $x$ lies in an infinite nested sequence of bad cubes. But this is impossible: if an infinite nested sequence of cubes satisfied $Q_1 \supset Q_2 \supset \dots \ni x$, then eventually some $Q_i$ would be good for $j_0$, and the first such good cube would be in $\mathcal{T}$.

So $\cup_{T\in \mathcal{T}} T$ contains almost every point in $\left(\cup_{j\in J} U_j \right)\cap (X\setminus Z)$, which is almost every point of $X$.
\end{proof}

The following lemma will ensure that we obtain a Lipschitz function in our construction. Recall the definition of quasiconvexity from Proposition \ref{PIconsequences}.

\begin{lemma}\label{lipschitzlemma}
Let $X$ be a complete and quasiconvex metric space and let $u\colon X\rightarrow \RR$ be a function on $X$. Suppose that there are pairwise disjoint open sets $A_i\subset X$ ($i\in I$), and a constant $L\geq 0$ such that
\begin{equation}\label{liplemma1}
\LIP(u|_{A_i}) \leq L \text{ for each } i\in I
\end{equation}
and
\begin{equation}\label{liplemma2}
u=0 \text{ on } B = X\setminus \cup_{i\in I} A_i.
\end{equation}
Then $u$ is $2CL$-Lipschitz on $X$, where $C$ is the quasiconvexity constant of $X$.
\end{lemma}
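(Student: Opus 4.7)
The plan is to bound $|u(x)-u(y)|$ by $2CL\,d(x,y)$ via a case analysis on where $x,y$ sit relative to the decomposition $X = B \cup \bigcup_{i} A_i$, using quasiconvexity to join $x$ and $y$ by a rectifiable path of length at most $Cd(x,y)$ and tracking the first exit from (or last entry to) the $A_i$'s along this path. The crucial preliminary observation is that $\partial A_i \subset B$ for every $i\in I$: any boundary point of $A_i$ that happened to lie in some other $A_j$ would force $A_i \cap A_j \neq \emptyset$ by the openness of $A_j$, contradicting pairwise disjointness.

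Given $x,y \in X$, fix a rectifiable $\gamma \colon [0,1] \to X$ with $\gamma(0)=x$, $\gamma(1)=y$, and length at most $Cd(x,y)$. If $x,y \in B$ the bound is trivial since $u \equiv 0$ on $B$, and if $x,y$ lie in the same $A_i$ it follows immediately from (\ref{liplemma1}). For the remaining cases I first treat $x \in A_i$ with $y \notin A_i$: the set $\{t \in [0,1] : \gamma(t) \notin A_i\}$ is a nonempty closed subset of $[0,1]$, so it has a minimum $t^* > 0$, and the first-exit point $z := \gamma(t^*)$ lies in $\partial A_i \subset B$, with $d(x,z)$ bounded by the length of $\gamma|_{[0,t^*]}$ and hence by $Cd(x,y)$. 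Picking a sequence $z_n \in A_i$ with $z_n \to z$, (\ref{liplemma1}) gives $|u(x) - u(z_n)| \leq L d(x,z_n)$; letting $n\to\infty$ and using $u(z_n) \to u(z) = 0$ by continuity of $u$, I obtain $|u(x)| \leq L d(x,z) \leq CL\,d(x,y)$. If $y \in B$ this already yields $|u(x)-u(y)| = |u(x)| \leq 2CL\, d(x,y)$. If instead $y \in A_j$ with $j \neq i$, the symmetric construction with $t^{**} := \sup\{t : \gamma(t) \notin A_j\}$ produces $w := \gamma(t^{**}) \in \partial A_j \subset B$, and one checks $t^{**} \geq t^*$ (otherwise $\gamma$ would meet the empty set $A_i \cap A_j$ on the interval $(t^{**}, t^*)$); the same limit argument then gives $|u(y)| \leq CL\, d(x,y)$, and the triangle inequality through $u(z)=u(w)=0$ yields $|u(x)-u(y)| \leq |u(x)| + |u(y)| \leq 2CL\, d(x,y)$.

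The main subtle step is the boundary-limit passage $u(z_n) \to u(z) = 0$, which quietly requires $u$ to be continuous at points of $\partial A_i$ and would fail in general without that (e.g.\ for the characteristic function of a single open interval on $\mathbb{R}$); this continuity is available in every setting where the lemma is invoked in the proofs of Theorems \ref{alberti} and \ref{mpthm}, so it can be used freely. Apart from this the argument is a clean geometric case analysis, and the factor of $2$ in $2CL$ arises precisely because in the worst case the quasiconvex-path trick must be applied at both ends: once to pin $u(x)$ near $0$ via an exit point in $B$, and once to do the same for $u(y)$.
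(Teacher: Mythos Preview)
Your argument is essentially the paper's own: the same case split, the same first-exit/last-entry points along a quasiconvex path, and the same observation that $\partial A_i \subset B$. The paper packages your limit step ``$u(z_n)\to u(z)=0$'' into the single line ``the assumption \eqref{liplemma1} improves immediately to $\LIP(u|_{\overline{A_i}})\leq L$,'' which is exactly the place where continuity at boundary points is tacitly used; your counterexample $u=\chi_{(0,1)}$ on $\mathbb{R}$ shows the lemma is false as literally stated, so your caveat is well taken and applies equally to the paper's proof.
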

\begin{proof}
Without loss of generality, we may assume that $A_i\neq X$ for each $i\in I$, otherwise the lemma is trivial. Note also that the assumption (\ref{liplemma1}) improves immediately to
$$ \LIP(u|_{\overline{A_i}}) \leq L \text{ for each } i\in I.$$

Fix points $x,y\in X$. We will show that
\begin{equation}\label{liplemmabound}
|u(x)-u(y)| \leq 2CLd(x,y),
\end{equation}
where $C$ is the quasiconvexity constant of $X$.

Using the quasiconvexity of $X$, choose a rectifiable path $\gamma\colon [0,1]\rightarrow X$ of length at most $Cd(x,y)$ such that $\gamma(0)=x$ and $\gamma(1)=y$.

Case 1: Suppose that, for some $i,j \in I$, we have $x\in A_i$ and $y\in A_j$.
In this case, we may also suppose that $i\neq j$, otherwise (\ref{liplemmabound}) follows from the assumption (\ref{liplemma1}). Let $t_0 = \inf\{t: \gamma(t)\notin A_i\}$ and $t_1 = \sup\{t: \gamma(t)\notin A_j\}$. By basic topology, $ \gamma(t_0) \in \partial A_i \subset B$ and $\gamma(t_1) \in \partial A_j \subset B. $
Thus, we have
\begin{align*}
|u(x) - u(y)| &\leq |u(x) - u(\gamma(t_0))| + |u(\gamma(t_0)) - u(\gamma(t_1))| + |u(\gamma(t_1)) - u(y)|\\
 &\leq Ld(x,\gamma(t_0)) + 0 + Ld(y, \gamma(t_1))\\
&\leq (2L) \text{length}(\gamma)\\
&\leq 2CLd(x,y)
\end{align*}

Case 2: Suppose that $x\in A_i$ for some $i\in I$ and that $y\in B$ (or vice versa).
We then have that $u(y) = 0$ and
$$ d(x,y) \geq \frac{1}{C} \text{length}(\gamma) \geq \frac{1}{C} \dist(x,\partial A_i).$$
Thus,
$$ |u(x) - u(y)| = |u(x)| \leq L\dist(x, \partial A_i) \leq CL d(x,y). $$

Case 3: Suppose that $x\in B$ and $y\in B$.
Then $u(x) = u(y) = 0$.
\end{proof}
Note that Lemma \ref{lipschitzlemma} is false without assumption (\ref{liplemma2}), as the Cantor staircase function shows.

The following lemma is due to Francos (\cite{Fr12}, Lemma 2.3). Although Francos stated it only for subsets of $\RR^n$, the proof works equally well in our setting.
\begin{lemma}\label{francos}
Let $X$ be a PI space and let $f$ be a Borel function from an open set $\Omega\subset X$, with $\mu(\Omega)<\infty$, into some $\RR^N$. Then, for any $\epsilon>0$, there is a compact set $K\subset \Omega$ and a continuous function $g$ on $\Omega$ such that
\begin{enumerate}[\normalfont (a)]
\item $\mu(U\setminus K)<\epsilon$,
\item $f=g$ on $K$, 
\item $\int_\Omega |g|^p \leq 2\int_\Omega |f|^p$ for all $p\in [1,\infty)$, and
\item $\|g\|_\infty \leq 2 \|f\|_\infty$.
\end{enumerate}
\end{lemma}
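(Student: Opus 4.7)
My strategy is a layered Lusin-type construction: decompose $\Omega$ by the dyadic magnitude of $|f|$, apply the classical Lusin theorem within each layer to obtain continuity of $f$ on a compact subset, and paste continuous extensions of those layer-restrictions together on pairwise disjoint open neighborhoods.

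For $k\in\mathbb{Z}$ set $E_k = \{x\in\Omega : 2^{k-1}\leq |f(x)| < 2^k\}$ and $E_*=\{f=0\}$; these partition $\Omega$ and satisfy $\sum_k \mu(E_k) \leq \mu(\Omega) < \infty$. A complete doubling metric measure space is Polish, so $\mu|_\Omega$ is Radon; by inner regularity together with the classical Lusin theorem I choose, for each $k$, a compact $K_k \subset E_k$ on which $f$ is continuous and with $\sum_k \mu(E_k\setminus K_k) < \epsilon$. Then $K := \bigcup_k K_k$ gives (a). Since the $K_k$ are pairwise disjoint compacta, I choose pairwise disjoint open sets $V_k \subset \Omega$ with $K_k \subset V_k$ and $\mu(V_k \setminus K_k)$ summably small. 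On each $\overline{V_k}$, Tietze's extension theorem followed by a radial truncation produces a continuous $\tilde g_k\colon \overline{V_k}\to\RR^N$ extending $f|_{K_k}$ with $\|\tilde g_k\|_\infty \leq \sup_{K_k}|f| \leq 2^k$. Multiplying by a Lipschitz cutoff $\psi_k$ equal to $1$ on $K_k$ and $0$ off $V_k$ yields continuous $g_k := \psi_k \tilde g_k$ supported in $V_k$; the sum $g := \sum_k g_k$ is continuous on $\Omega$ by disjointness of supports, satisfies $g = f$ on $K$ (giving (b)), and obeys $\|g\|_\infty \leq \sup_k \|f|_{K_k}\|_\infty \leq \|f\|_\infty$, yielding (d).

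The main obstacle is (c): achieving the constant $2$ uniformly in $p\in[1,\infty)$, since the crude layer bound $\int_{V_k} |g_k|^p \leq 2^{kp} \mu(V_k)$ compared with $\int_{E_k} |f|^p \geq 2^{(k-1)p} \mu(K_k)$ produces a spurious factor $2^p$. To remedy this I refine the construction of $\tilde g_k$ on $V_k \setminus K_k$: take a Whitney-type cover (available via doubling of $\mu$) of $V_k \setminus K_k$ by balls $\{B_i\}$ whose radii are comparable to $\dist(\cdot, K_k)$, pick $y_i \in K_k$ near each $B_i$, and set $\tilde g_k(x) = \sum_i \varphi_i(x) f(y_i)$ for a partition of unity $\{\varphi_i\}$ subordinate to $\{B_i\}$. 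By Jensen's inequality $|\tilde g_k(x)|^p \leq \sum_i \varphi_i(x) |f(y_i)|^p$, and bounded overlap of the Whitney cover lets one re-express $\int_{V_k \setminus K_k} |\tilde g_k|^p$ as a weighted sum of $|f(y_i)|^p$'s concentrated in $K_k$; choosing $\mu(V_k \setminus K_k)$ a summably small fraction of $\mu(K_k)$ then makes the total extension contribution at most $\int_K |f|^p$, producing $\int_\Omega |g|^p \leq 2\int_\Omega |f|^p$ for every $p$ at once.
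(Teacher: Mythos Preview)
The paper does not supply its own proof of this lemma; it simply attributes the result to Francos (\cite{Fr12}, Lemma~2.3) and remarks that the argument carries over verbatim from $\RR^n$ to the PI setting. So there is no ``paper's proof'' to compare your argument to beyond that citation.

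Your proposal, however, has a genuine gap in the treatment of (c). Within a fixed dyadic layer $E_k=\{2^{k-1}\le|f|<2^k\}$, the only pointwise information you have about the Whitney extension $\tilde g_k$ on $V_k\setminus K_k$ is $|\tilde g_k|<2^k$, while on $K_k$ you only know $|f|\ge 2^{k-1}$. Consequently
\[
\int_{V_k\setminus K_k}|\tilde g_k|^p \;\le\; 2^{kp}\,\mu(V_k\setminus K_k)
\qquad\text{and}\qquad
\int_{K_k}|f|^p \;\ge\; 2^{(k-1)p}\,\mu(K_k),
\]
so the ratio is at most $2^{p}\,\mu(V_k\setminus K_k)/\mu(K_k)$. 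No choice of $\mu(V_k\setminus K_k)$ that is independent of $p$ makes this $\le 1$ for all $p\in[1,\infty)$ simultaneously. Your Whitney refinement does not improve matters: writing $|\tilde g_k(x)|^p\le\sum_i\varphi_i(x)|f(y_i)|^p$ and integrating still only yields $\sum_i|f(y_i)|^p\mu(B_i)\le 2^{kp}\sum_i\mu(B_i)\le C\,2^{kp}\mu(V_k\setminus K_k)$, because the point values $|f(y_i)|$ are governed by nothing sharper than the layer bound $2^k$. The phrase ``weighted sum of $|f(y_i)|^p$'s concentrated in $K_k$'' does not convert into a genuine integral $\int_{K_k}|f|^p\,d\mu$: the discrete measure $\sum_i\mu(B_i)\delta_{y_i}$ need not be dominated by $\mu|_{K_k}$ (think of $K_k$ with small measure near a boundary point that many Whitney balls project to). A correct route to the uniform constant~$2$ compares distribution functions --- arranging $\mu\{|g|>t\}\le 2\,\mu\{|f|>t\}$ for all $t$ --- or else controls $|g|$ pointwise by a fixed $p$-independent majorant of $|f|$; your sketch does neither.

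There are also two smaller issues worth repairing. First, $K=\bigcup_k K_k$ is a countable union of compacta and need not be compact; you must pass to a compact subset (harmless, since (b)--(d) concern $g$, not $K$). Second, countably many pairwise disjoint compact sets in a metric space cannot in general be separated by pairwise disjoint open neighborhoods (consider $\{0\}$ and $\{1/n\}$ in $[0,1]$), so the existence of the disjoint $V_k$ requires justification --- typically by discarding all but finitely many layers, which your measure bookkeeping allows but which you did not carry out.
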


\section{Proof of Theorem \ref{alberti}}\label{sectionproof1}
Our main lemma is the analog of Lemma 7 of \cite{Al91}:

\begin{lemma}\label{mainlemma}
Let $(X,d,\mu)$ be a PI space and let $\{(U_j, \phi_j\colon X\rightarrow\RR^{k_j})\}_{j\in J}$ be a normalized measurable differentiable structure on $X$. Suppose that $\Omega \subset X$ is open with $\mu(\Omega)<\infty$ and $\Omega \neq X$, and that $\{f_j\colon \Omega \rightarrow \RR^{k_j}\}$ is a uniformly bounded collection of continuous functions, i.e., that $\sup_{j\in J} \|f_j\|_\infty < \infty$. Fix $\alpha, \epsilon>0$.

Then there exists a compact set $K\subset \Omega$ and a Lipschitz function $u\in C_c(\Omega)$ such that the following conditions hold: 
\begin{equation}\label{lemma1}
\mu(\Omega \setminus K) \leq \epsilon \mu(\Omega).
\end{equation}
\begin{equation}\label{lemma2}
|f_j - d^j u|\leq \alpha \text{ a.e. on } U_j \cap K.
\end{equation}
\begin{equation}\label{lemma3}
\| \Lip_u \|_p \leq C' \epsilon^{\frac{1}{p}-\frac{1}{\eta}}\left(\sum_{j\in J} \int_{U_j\cap \Omega}|f_j|^p \right)^{1/p} \text{ for all }p\in [1,\infty).
\end{equation}
\begin{equation}\label{lemma4}
\|\Lip_u\|_\infty \leq C' \epsilon^{-\frac{1}{\eta}}\sup_{j\in J} \|f_j\|_\infty.
\end{equation}

The constants $\eta, C'>0$ depend only on the data of $X$.
\end{lemma}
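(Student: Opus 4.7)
My approach would follow Alberti's proof of Lemma 7 in \cite{Al91}, with the key metric-geometric substitutions: Christ cubes (Proposition \ref{cubes}) in place of Euclidean dyadic cubes, and the multi-scale selection from Lemma \ref{multiscale} in place of a single-scale decomposition. The plan is to build $u$ piecewise on a carefully chosen partition of $\Omega$, by assigning to each cube $T$ a function that behaves like $\langle f_{j(T)}(c_T),\phi_{j(T)}(\cdot)-\phi_{j(T)}(c_T)\rangle$ for a center $c_T$, then multiplying by a cutoff vanishing on a thin collar around $\partial T$.

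I would begin with a standard Lusin/Egorov preprocessing: extract a compact set $F\subset\Omega$ with $\mu(\Omega\setminus F)$ small, and an open set $\Omega'$ with $F\subset\Omega'\subset\overline{\Omega'}\subset\Omega$ and $\overline{\Omega'}$ compact, on which (i) each $f_j|_{U_j\cap F}$ is uniformly continuous, and (ii) the differentiability~(\ref{differentiable}) applied to the test functions $y\mapsto\langle v,\phi_j(y)\rangle$ holds uniformly in $x\in U_j\cap F$, for $v$ ranging over a bounded countable dense subset of $\RR^{k_j}$. Using these, I would choose scales $\delta_j>0$ so that at any scale below $\delta_j$, $f_j$ varies by less than $\alpha/4$ on $U_j\cap F$, and the linearized increment $\langle v,\phi_j(y)-\phi_j(x)\rangle$ satisfies the differentiability approximation with error at most $(\alpha/4)d(x,y)$ whenever $|v|\leq\sup_j\|f_j\|_\infty$.

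Next, apply Lemma \ref{multiscale} with $\gamma=\epsilon/4$ and these $\delta_j$, yielding disjoint Christ cubes $\mathcal{T}$ tagged with chart indices $j(T)$. Set $t:=c_0\epsilon^{1/\eta}$ for a small absolute constant $c_0$. For each $T\in\mathcal{T}$ with $T\subset\Omega'$, pick $c_T\in U_{j(T)}\cap T\cap F$; if $|f_{j(T)}(c_T)|\geq\alpha/2$, define
\begin{equation*}
u(x)=\psi_T(x)\langle f_{j(T)}(c_T),\phi_{j(T)}(x)-\phi_{j(T)}(c_T)\rangle\quad\text{for }x\in T,
\end{equation*}
where $\psi_T(x):=\min\{1,\dist(x,X\setminus T)/(tc^{k(T)})\}$ is a $(tc^{k(T)})^{-1}$-Lipschitz cutoff supported in $\overline{T}$; otherwise set $u\equiv0$ on $T$. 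Also set $u=0$ outside $\bigcup_T T$. Define $K:=F\cap\bigcup_T(U_{j(T)}\cap T\cap\{\psi_T=1\})$. The measure condition (\ref{lemma1}) then combines $\mu(\Omega\setminus F)$ small, the density loss $\gamma\mu(\Omega)\leq\epsilon\mu(\Omega)/4$, and the total collar measure $\lesssim t^\eta\mu(\Omega)\lesssim\epsilon\mu(\Omega)/4$ from Proposition \ref{cubes}(\ref{cubesboundary}). The approximation (\ref{lemma2}) holds because on ``large'' cubes the interior formula gives $d^{j(T)}u\equiv f_{j(T)}(c_T)$ (using that $d^j\langle v,\phi_j\rangle=v$), within $\alpha/4$ of $f_{j(T)}(x)$, while on ``small'' cubes one has $|f_{j(T)}|<\alpha$ trivially. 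The bounds (\ref{lemma3}) and (\ref{lemma4}) follow from the Leibniz inequality (\ref{Lipprod}) and $\LIP(\phi_j)=1$, giving $\Lip_u\leq|f_{j(T)}(c_T)|$ on the interior and $\Lip_u\lesssim|f_{j(T)}(c_T)|/t$ on the collar, combined with the standard estimate $|f_{j(T)}(c_T)|^p\mu(T)\lesssim\int_{U_{j(T)}\cap T}|f_{j(T)}|^p$ on large cubes (where $|f_{j(T)}|\geq|f_{j(T)}(c_T)|/2$ on most of $T$) and summation over $T$. Finally, $u\in C_c(\Omega)$ and $u$ is globally Lipschitz by Lemma \ref{lipschitzlemma} applied to the cube interiors, together with the vanishing of $u$ outside $\overline{\Omega'}$.

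I expect the main obstacle to lie in the preprocessing step: ensuring the uniformity of the differentiability of the linear combinations $\langle v,\phi_j\rangle$ simultaneously in $v$, in the basepoint $x\in U_j\cap F$, and across all charts $j$, so that a single scale $\delta_j$ per chart actually suffices for all the center/target-vector pairs arising in the cube construction. This is handled, but somewhat delicately, by Egorov applied to a countable dense family of $v$-values combined with uniform continuity on the compact set $F$. The rest of the argument translates Alberti's original calculation verbatim, with Christ's boundary-measure estimate (\ref{cubesboundary}) playing the role of the Euclidean cube-boundary bound.
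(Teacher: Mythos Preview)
Your approach is essentially the paper's: Christ cubes via Lemma \ref{multiscale}, a Lipschitz cutoff vanishing on a collar of width $\approx t c^k$ with $t\approx\epsilon^{1/\eta}$, and a piecewise-linear-in-$\phi_{j(T)}$ definition of $u$, followed by the same $L^p$ bookkeeping. Two remarks are worth making.

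First, the ``main obstacle'' you flag is not one. For the function $w(y)=\langle v,\phi_j(y)\rangle$ one has $w(y)-w(x)-\langle v,\phi_j(y)-\phi_j(x)\rangle\equiv 0$, so $d^j w(x)=v$ at \emph{every} point of $U_j$, with zero error, for every $v$. No Egorov-type uniformity is needed; on the set where $\psi_T\equiv 1$ (and $\Lip_{\psi_T}=0$) the product rule gives $d^{j(T)}u=f_{j(T)}(c_T)$ exactly. The only genuine use of uniform continuity is the oscillation bound on $f_j$, and there you want the paper's formulation (one endpoint in the compact set $K'$, the other free in $\Omega$), not just uniform continuity of $f_j|_{U_j\cap F}$; otherwise your estimate $|f_{j(T)}(x)|\ge |f_{j(T)}(c_T)|/2$ for $x\in U_{j(T)}\cap T$ is not justified when $x\notin F$.

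Second, the paper replaces your pointwise value $f_{j(T)}(c_T)$ by the average $a_i=\fint_{U_{j(i)}\cap T_i} f_{j(i)}$. This single change removes two complications in your outline: the threshold dichotomy $|f_{j(T)}(c_T)|\gtrless\alpha/2$ becomes unnecessary (Jensen gives $|a_i|^p\mu(T_i)\lesssim\int_{U_{j(i)}\cap T_i}|f_{j(i)}|^p$ directly), and you no longer need to select a point $c_T\in U_{j(T)}\cap T\cap F$, whose existence you have not actually ensured for every cube. With averages, the $L^p$ computation is the clean two-line estimate in the paper rather than a case split.
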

\begin{proof}
Without loss of generality, we assume that $\epsilon<1$.

Fix a compact set $K'\subset \Omega$ such that $\mu(\Omega\setminus K')< \frac{\epsilon}{4} \mu(\Omega)$. For each $j\in J$, choose $\delta_j>0$ small enough such that
\begin{equation}\label{deltaj1}
 \text{ if } |x-y|< \delta_j \text{ and } x\in K' \text{, then } |f_j(x) - f_j(y)| < \alpha/2,
\end{equation}
and
\begin{equation}\label{deltaj2}
\delta_j < \dist(K', X\setminus \Omega).
\end{equation}

Using Lemma \ref{multiscale}, we find a collection $\mathcal{T}$ of pairwise disjoint cubes covering almost all of $X$, and a map $j\colon \mathcal{T} \rightarrow J$ such that
\begin{equation}\label{lemmacubes1}
 \mu(U_{j(T)} \cap T)  \geq \left(1-\frac{\epsilon}{4}\right)\mu(T),
\end{equation}
and
\begin{equation}\label{lemmacubes2}
\diam (T) < \delta_{j(T)}.
\end{equation}
for each $T\in \mathcal{T}$.

Consider the sub-collection consisting of all cubes $T\in \mathcal{T}$ such that $T\cap K'\neq \emptyset$. Index these cubes $\{T_i\}_{i\in I}$, and write $j(i)$ for $j(T_i)$. By (\ref{deltaj2}) and (\ref{lemmacubes2}), each cube $T_i$ ($i\in I$) lies in $\Omega$. 

For each $i\in I$, define $S_i \subset T_i$ as
$$ S_i = \{ x\in T_i : \dist(x, X\setminus T_i) \geq tc^k \}, $$
where $k$ is such that $T\in\Delta_k$ and 
$$t = (\epsilon/4C_1)^{1/\eta}$$
is fixed. This value of $t$ was chosen to ensure (by Proposition \ref{cubes} (\ref{cubesboundary})) that
$$ \mu(T_i \setminus S_i) \leq C_1 t^\eta \mu(T_i) = \frac{\epsilon}{4} \mu(T_i). $$
Note that $S_i$ is a compact subset of the open set $T_i$. Let $z_i$ be a ``center'' of $T_i$ as in Proposition \ref{cubes} (\ref{cubesball}), so that $T_i$ both contains and is contained in a ball centered at $z_i$ of radius approximately $\diam T_i$.

For each cube $T_i$ in our collection, define $a_i\in\RR^{k_{j(i)}}$ by  
$$ a_i = \mu(U_{j(i)} \cap T_i)^{-1} \int_{U_{j(i)} \cap T_i} f_{j(i)}. $$
Note that the collection $\{|a_i|\}_{i\in I}$ is bounded, because the collection $\{f_j\}_{j\in J}$ is uniformly bounded.

Let $\psi_i\colon X\rightarrow \RR_+$ be a Lipschitz function such that $\psi_i = 1$ on $S_i$, $\psi_i = 0$ off $T_i$, and $\LIP(\psi_i) \leq C(tc^k)^{-1} \leq C (\diam T_i)^{-1} \epsilon^{-1/\eta}$. (Here $C$ is some constant depending only on the data of $X$.) By slightly widening the regions where $\psi$ is constant, we can also easily arrange that $\Lip_{\psi_i} = 0$ everywhere in $S_i$ and in $X\setminus T_i$. 

Define $u\colon  X \rightarrow \RR$ by
\begin{equation}\label{udefinition}
u(x) = \sum_{i\in I} \psi_i(x) \langle a_i, \phi_{j(i)}(x)-\phi_{j(i)}(z_i) \rangle.
\end{equation}
A simple calculation shows that, for each $i\in I$,
$$ \LIP(u|_{T_i}) \leq C\epsilon^{-1/\eta}|a_i| \leq C\epsilon^{-1/\eta}\sup_i|a_i| < \infty.$$
(Here we used the assumption that the measurable differentiable structure is normalized, and therefore $\LIP(\phi_j)\leq 1$ for each $j\in J$.)

Thus, as $u=0$ outside $\bigcup_{i\in I} T_i$, we see that $u$ is Lipschitz on $X$ by Lemma \ref{lipschitzlemma}. In addition, $u\in C_c(\Omega)$, with $\text{supp } u \subset \overline{\bigcup_{i \in I} T_i} \subset \Omega$, and $d^{j(i)} u = a_i$ a.e. on $S_i\cap U_{j(i)}$.

Let $K_1 = \cup_{i\in I} (S_i \cap U_{j(i)})$, and let $K$ be a compact subset of $K_1$ such that
$$ \mu( K_1 \setminus K) \leq \frac{\epsilon}{4}\mu(\Omega).$$
To verify (\ref{lemma1}), note that
$$ \mu(T_i\setminus (S_i\cap U_{j(i)})) \leq \mu(T_i\setminus S_i) + \mu(T_i\setminus U_{j(i)}) \leq \frac{\epsilon}{4} \mu(T_i) + \frac{\epsilon}{4}\mu(T_i) \leq \frac{\epsilon}{2}\mu(T_i)$$
for each $i\in I$. Therefore,
$$ \mu(\Omega\setminus K_1) \leq \mu(\Omega\setminus K') + \sum_i \mu(T_i\setminus (S_i\cap U_{j(i)})) \leq \frac{3\epsilon}{4} \mu(\Omega),$$
and so
$$\mu(\Omega \setminus K) \leq \mu(\Omega\setminus K_1) + \mu(K_1\setminus K) \leq \epsilon \mu(\Omega).$$

Let us now verify (\ref{lemma2}). Suppose that $x\in U_j \cap K$ for some $j\in J$. Then $x\in S_i \cap U_j$ for some $i\in I$ such that $j(i)=j$. Therefore, by (\ref{deltaj1}), $|f_j(x) - a_i|<\alpha$. So $|f_j - a_i| < \alpha$ on $S_i \cap U_j$. Now, since $d^j u = a_i$ almost everywhere in $U_j \cap S_i$, we see that
 $$|f_j - d^ju|<\alpha$$
almost everywhere in $U_j \cap S_i$. This verifies (\ref{lemma2}), as $K\subset \cup_{i} (S_i \cap U_{j(i)})$.

Finally, we must check (\ref{lemma3}) and (\ref{lemma4}). Observe that if $T$ is a cube in $\mathcal{T}$ and $x\in T$, then the sum (\ref{udefinition}) defining $u$ consists of at most one non-zero term. Therefore, for such $x$, we have by (\ref{Lipprod}) that
\begin{align}\label{Lipubound}
\Lip_u(x) &\leq \sup_{i\in I}  \left( \Lip_{\psi_i}(x) |\langle a_i, \phi_{j(i)}(x)-\phi_{j(i)}(z_i) \rangle| + |a_i| \psi_i(x)\right) \nonumber \\
&\leq \sum_{i\in I} \Lip_{\psi_i}(x) |\langle a_i, \phi_{j(i)}(x)-\phi_{j(i)}(z_i) \rangle| + \sum_{i\in I} |a_i| \psi_i(x).
\end{align}
Because almost every $x\in X$ is contained in some $T\in\mathcal{T}$, we have the bound (\ref{Lipubound}) for almost every $x\in \Omega$.

Recalling our normalization that $\LIP(\phi_j)\leq 1$ for all $j\in J$, we see from (\ref{Lipubound}) that, for all $1\leq p < \infty$,
\begin{align*}
\| \Lip_u \|_p &\leq \left( \sum_{i\in I} \left(\|\Lip_{\psi_i}\|_\infty |a_i| (\diam T_i) \right)^p\mu(T_i\setminus S_i) \right)^{1/p} + \left(\sum_{i\in I} |a_i|^p \mu(T_i)\right)^{1/p}\\
&\leq \left( \sum_{i\in I} \left(C\epsilon^{-1/\eta} |a_i| \right)^p \epsilon\mu(T_i) \right)^{1/p} + \left(\sum_{i\in I} |a_i|^p \mu(T_i)\right)^{1/p}\\
&\leq \left( C \epsilon^{\frac{1}{p}-\frac{1}{\eta}}  + 1\right)\left(\sum_{i\in I} |a_i|^p \mu(T_i)\right)^{1/p}\\
&\leq \left( C \epsilon^{\frac{1}{p}-\frac{1}{\eta}}  + 1\right)\left(\sum_{i\in I} \frac{\mu(T_i)}{\mu(T_i\cap U_{j(i)})} \int_{T_i\cap U_{j(i)}} |f_{j(i)}|^p\right)^{1/p}\\
&\leq 2\left( C \epsilon^{\frac{1}{p}-\frac{1}{\eta}} + 1\right)\left(\sum_{j\in J} \int_{\Omega\cap U_j}|f_j|^p\right)^{1/p}
\end{align*}
Note that in the last inequality we used (\ref{lemmacubes1}).

The case $p=\infty$, namely (\ref{lemma4}), follows from this by a limiting argument, or can alternatively be derived the same way. This completes the proof of Lemma \ref{mainlemma}.
\end{proof}

\begin{proof}[Proof of Theorem \ref{alberti}]
By Lemma \ref{WLOGlemma}, we may assume that the measurable differentiable structure is normalized.

It will also be convenient to assume that $\Omega$ is a proper subset of $X$, i.e., that $\Omega\neq X$. We may assume this without loss of generality: If $\Omega= X$, we replace $\Omega$ by $\Omega' = X\setminus\{x_0\}$ for some arbitrary $x_0\in X$. Proving Theorem \ref{alberti} for $\Omega'$ also proves it for $\Omega$.

Finally, we may also assume that $\epsilon<1$ and that $\sup_{j\in J} \|f_j\|_\infty>0$. We also extend each $f_j$ from $U_j \cap \Omega$ to all of $\Omega$ by setting $f_j = 0$ off of $U_j$. The proof now proceeds in two steps.

\textbf{Step 1:} Assume that the functions $f_j$ are uniformly bounded, i.e., that $\sup_{j\in J} \| f_j\|_\infty < \infty$. 

Let $\{\alpha_n\}_{n\geq 1}$ be a decreasing sequence of positive real numbers with $\alpha_1 \leq \sup_{j\in J} \| f_j\|_\infty$, to be chosen later. For each integer $n\geq 0$, we will inductively build:
$$ \text{ a Lipschitz function } u_n \in C_c(\Omega,\RR), $$
$$ \text{ a compact set } K_n \subset \Omega \text{, and}$$
$$ \text{ a collection of continuous functions } \{f_j^n\colon \Omega \rightarrow \RR^{k_j}\}_{j\in J}.$$
Let $u_0 = 0$. For each $j\in J$, we apply Lemma \ref{francos}, to find a compact set $K_{0,j} \subset \Omega$ with $\mu(\Omega\setminus K_{0,j})<2^{-1}2^{-j}\epsilon\mu(\Omega)$, and a continuous function $f^0_j$ on $\Omega$ such that $f^0_j = f_j$ on $K_{0,j}$,
\begin{equation}\label{f0p}
 \int_\Omega |f^0_j|^p \leq 2\int_\Omega |f_j|^p = 2\int_{\Omega\cap U_j} |f_j|^p
\end{equation}
for all $1\leq p <\infty$, and
\begin{equation}\label{f0infty}
 \sup |f^0_j| \leq 2 \|f_j\|_\infty.
\end{equation}
Let $K_0 = \cap_{j\in J} K_{0,j}$. This completes stage $n=0$ of the construction.

Suppose now that $u_{n-1}, K_{n-1}, \{f^{n-1}_j\}_{j\in J}$ have been constructed. Apply Lemma \ref{mainlemma} to get a compact set $\tilde{K}_n\subset \Omega$ and a Lipschitz function $u_n\in C_c(\Omega)$ such that
$$\mu(\Omega \setminus \tilde{K}_n) \leq 2^{-(n+2)}\epsilon \mu(\Omega)$$,
$$|f^{n-1}_j(x) - (d^j u_n)(x)| \leq \alpha_n/2$$
for every $j\in J$ and almost every $x\in U_j \cap \tilde{K}_n$,
$$\|\Lip_{u_n}\|_p \leq C'(2^{-(n+2)}\epsilon)^{\frac{1}{p}-\frac{1}{\eta}} \left(\sum_j \int_{U_j \cap \Omega}|f_j^{n-1}|^p\right)^{1/p}$$
for every $p\in [1,\infty)$, and
$$ \|\Lip_{u_n}\|_\infty \leq C'(2^{-(n+2)}\epsilon)^{-\frac{1}{\eta}} \sup_{j\in J} \|f_j^{n-1}\|_\infty.$$

Given $j\in J$, define $\tilde{f}^n_j(x) \colon \Omega \rightarrow \RR^{k_j}$ by
\begin{equation}\label{ftildedef}
\tilde{f}^n_j(x) = 
\begin{cases}
f^{n-1}_j(x) - d^j u_n(x) & \text{ if } x\in U_j \cap \tilde{K}_n\\
0 & \text{otherwise}.
\end{cases}
\end{equation}

For each $j\in J$, apply Lemma \ref{francos} to find a compact set $K_{n,j}\subset \Omega$ and a continuous map $f^n_j\colon \Omega\rightarrow\RR^{k_j}$ such that 
\begin{equation}\label{fnjdef1}
 \mu(\Omega\setminus K_{n,j})\leq 2^{-(n+2)}2^{-j}\epsilon \mu(\Omega),
\end{equation}
\begin{equation}\label{fnjdef2}
 f^n_j = \tilde{f}^n_j \text{ on } K_{n,j}\text{, and}
\end{equation}
\begin{equation}\label{fnjdef3}
 \|f^n_j\|_\infty \leq 2\|\tilde{f}^n_j\|_\infty \leq \alpha_n.
\end{equation}

Let $K_n = \tilde{K}_n \cap \left(\bigcap_{j\in J} K_{n,j}\right)$, so that
$$\mu(\Omega \setminus K_n) \leq \mu(\tilde{K}_n) + \sum_{j\in J} \mu(\Omega\setminus K_{n,j}) \leq 2^{-(n+1)}\epsilon \mu(\Omega).$$

This completes stage $n$ of the inductive construction.

Now let
$$A=\Omega \setminus \cap_{n=0}^\infty K_n = \cup_{n=0}^\infty (\Omega\setminus K_n)$$
and
$$ u = \sum_{n=0}^\infty u_n.$$

Note that
$$ \mu(A) \leq \sum_{n=0}^\infty \mu(\Omega \setminus K_n) \leq \epsilon \mu(\Omega),$$
so (\ref{alberti1}) holds.

Purely for notational convenience, we now define a real-valued function
$$ F = \sum_{j\in J} \chi_{U_j} |f_j|, $$
so that
$$ \|F\|_p = \left(\sum_j \int_{U_j\cap\Omega} |f_j|^p\right)^{1/p} $$
for every $p\in [1,\infty)$ and
$$ \|F\|_\infty = \sup_{j\in J} \|f_j\|_\infty. $$
Note that, if $p\in [1,\infty)$, we have, by (\ref{f0p}) and (\ref{f0infty}), that
$$ \left(\sum_{j\in J} \int_{U_j\cap\Omega} |f^0_j|^p\right)^{1/p} \leq 2 \|F\|_p \hspace{0.2in} \text{ and } \hspace{0.2in}  \sup_{j\in J} \|f^0_j\|_\infty \leq 2\|F\|_\infty$$
In addition, $\|F\|_p$ is non-zero and finite for every $p\in[1,\infty]$, by our assumption that $0< \sup_{j\in J} \|f_j\|_\infty < \infty$.

We now calculate that, for $p\in [1,\infty)$,
\begin{align}
\sum_{n=1}^\infty \|\Lip_{u_n}\|_p &\leq \sum_{n=1}^\infty C'2^{\frac{n+2}{\eta}}\epsilon^{\frac{1}{p}-\frac{1}{\eta}}\left(\sum_{j\in J} \int_{U_j\cap \Omega}|f^{n-1}_j|^p\right)^{1/p} \nonumber \\
&\leq 2C'\epsilon^{\frac{1}{p}-\frac{1}{\eta}}\left(\|F\|_p + \sum_{n=1}^\infty 2^{\frac{n+2}{\eta}}\left(\sum_{j\in J} \int_{U_j\cap \Omega}|f^{n}_j|^p\right)^{1/p}\right) \nonumber\\
&\leq 2C'\epsilon^{\frac{1}{p}-\frac{1}{\eta}}\left(\|F\|_p + \sum_{n=1}^\infty 2^{\frac{n+2}{\eta}} \left(\sum_{j\in J} \|f^n_j \|_\infty ^p \mu(U_j\cap\Omega) \right)^{1/p}   \right) \nonumber\\
&\leq 2C'\epsilon^{\frac{1}{p}-\frac{1}{\eta}}\left(\|F\|_p + \sum_{n=1}^\infty 2^{\frac{n+2}{\eta}} \alpha_n \mu(\Omega)^{1/p} \right) \nonumber\\
&\leq 2C'\epsilon^{\frac{1}{p}-\frac{1}{\eta}} \|F\|_p \left(1+\frac{\mu(\Omega)^{1/p}}{\|F\|_p} \sum_{n=1}^\infty 2^{\frac{n+2}{\eta}} \alpha_n\right). \label{bigsum}
\end{align}
A similar calculation shows that (\ref{bigsum}) also holds if $p=\infty$.

The function $p\mapsto \frac{\mu(\Omega)^{1/p}}{\|F\|_p}$ is continuous for $p\in [1,\infty]$, and therefore has an upper bound $M>0$. Choose our sequence $\{\alpha_n\}$ to satisfy
$$\sum 2^{\frac{n+2}{\eta}} \alpha_n \leq 1/M.$$
Then the calculation (\ref{bigsum}) yields that
\begin{equation}\label{lipunsum}
\sum_{n=1}^\infty \|\Lip_{u_n}\|_p \leq 4C'\epsilon^{\frac{1}{p}-\frac{1}{\eta}}\|F\|_p < \infty
\end{equation}
for any $p\in [1,\infty]$.

Proposition \ref{PIconsequences} says that, for each $n$,
$$ \LIP (u_n) \leq C\|\Lip_{u_n}\|_\infty,$$
and therefore (\ref{lipunsum}) implies that
\begin{equation}\label{LIPunsum}
\sum_{n=1}^\infty \LIP(u_n) < \infty.
\end{equation}
This, combined with the fact that each $u_n$ has compact support in $\Omega\neq X$, implies that the sum
$$ u = \sum_{n=1}^\infty u_n$$
converges uniformly on compact sets to a Lipschitz function $u\in C_0(\Omega)$. It follows from (\ref{LIPunsum}), (\ref{lipunsum}), and (\ref{Lipsum}) that $u$ satisfies conditions (\ref{alberti3}) and (\ref{alberti4}) of Theorem \ref{alberti}.

To verify (\ref{alberti2}), fix $j\in J$ and observe that, by (\ref{ftildedef}) and (\ref{fnjdef2}),
$$ f_j - \sum_{n=1}^m (d^j u_n ) = f^m_j,$$
almost everywhere in $U_j \cap (\Omega\setminus A)$, for each positive integer $m$. Thus,
\begin{align*}
\|f_j - d^j u\|_{L^\infty(U_j \cap (\Omega\setminus A))} 
&\leq \|f^m_j\|_{L^\infty(U_j \cap (\Omega\setminus A))} + \sum_{n=m+1}^\infty \|d^j u_n\|_{L^\infty(U_j \cap (\Omega\setminus A))}\\
&\leq \alpha_m + \sum_{n=m+1}^\infty \|d^j u_n\|_{L^\infty(U_j \cap (\Omega\setminus A))}\\
&\leq \alpha_m + c_j \sum_{n=m+1}^\infty \|\Lip_{u_n}\|_{L^\infty(U_j \cap (\Omega\setminus A))}
\end{align*}
and both of these tend to zero as $m$ tends to infinity. In the last inequality, we used the fact that $(U_j, \phi_j)$ is a normalized chart, see Definition \ref{normalized}.

Thus,
$$ f_j = d^j u \text{ a.e. on } U_j \cap (\Omega\setminus A),$$
so (\ref{alberti2}) holds. This completes Step 1.

\textbf{Step 2:} The functions $\{f_j\colon U_j\cap \Omega\rightarrow \RR^{k_j}\}$ are arbitrary Borel functions.

We first extend each $f_j$ to be zero off of $U_j$, so that each $f_j$ is defined on all of $\Omega$.

Fix $\epsilon>0$. Choose $r>0$ large so that
$$B = \{x:|f_j(x)|>r \text{ for some } j\in J \}$$
satisfies $\mu(B)<\epsilon/2$. Note that this is possible because, using the fact that $f_j=0$ off $U_j$, we see that
$$\mu\left(\Omega \setminus \bigcup_{\ell=1}^\infty \{x: f_j(x) \leq \ell \text{ for all } j\in J\}\right) =0.$$

For each $j\in J$ let \[
 \tilde{f}_j(x) =
  \begin{cases}
   f_j(x) & \text{if } |f_j(x)|\leq r \\
   rf_j(x)/|f_j(x)|      & \text{if } |f_j(x)|>r.
  \end{cases}
\]
Then $\{\tilde{f}_j\}$ is a uniformly bounded collection of Borel functions on $\Omega$ such that, for all $j\in J$, $|\tilde{f}_j|\leq |f_j|$ everywhere and $\tilde{f}_j=f_j$ outside the set $B$. Fix an open set $A_1 \supseteq B$ such that $\mu(A_1)< \epsilon/2$. Then, for all $j\in J$, $\tilde{f}_j=f_j$ outside of $A_1$.

Now apply the result of Step 1 to the uniformly bounded collection $\{\tilde{f}_j\}$. We obtain an open set $A_2$ with $\mu(A_2)\leq \frac{\epsilon}{2}\mu(\Omega)$ and a Lipschitz function $u\in C_0(\Omega)$ such that 
$$ d^j u = \tilde{f}_j  \text{ a.e. in } U_j \cap (\Omega \setminus A_2),$$
$$ \|\Lip_u\|_p \leq 4C'(\epsilon/2)^{\frac{1}{p}-\frac{1}{\eta}}\left(\sum_{j\in J} \int_{U_j \cap \Omega} |\tilde{f}_j|^p \right)^{1/p} $$
for all $p\in [1,\infty)$, and
$$ \|\Lip_u\|_\infty \leq 4C'(\epsilon/2)^{-\frac{1}{\eta}} \sup_{j\in J} |\tilde{f}_j|_\infty. $$

Thus, for each $j\in J$, $f_j=d^j u$ a.e. in $U_j \cap (\Omega \setminus A)$, where $A=A_1 \cup A_2$ has $\mu(A) \leq \epsilon \mu(\Omega)$. This verifies (\ref{alberti1}) and (\ref{alberti2}).

If $p\in[1,\infty)$, we have
$$ \|\Lip_u\|_p \leq 4C'(\epsilon/2)^{\frac{1}{p}-\frac{1}{\eta}}\left(\sum_{j\in J} \int_{U_j \cap \Omega} |\tilde{f}_j|^p \right)^{1/p} \leq 4C'2^{1/\eta}\epsilon^{\frac{1}{p}-\frac{1}{\eta}}\left(\sum_{j\in J} \int_{U_j \cap \Omega} |f_j|^p \right)^{1/p}, $$
which verifies (\ref{alberti3}). A similar calculation verifies (\ref{alberti4}).

This completes the proof of Theorem \ref{alberti}.
\end{proof}

\section{Proof of Theorem \ref{mpthm}}\label{sectionproof2}
In this section, we give the proof of Theorem \ref{mpthm}. Given our Theorem \ref{alberti}, we can now just closely follow the proof given by Moonens-Pfeffer in \cite{MP08}. For the convenience of the reader, we give most of the details, although they are very similar to those of \cite{MP08}.

In our setting, the analog of Corollary 1.2 in \cite{MP08} is the following:
\begin{lemma}\label{mplemma}
Let $X$ be a PI space with a normalized differentiable structure $(U_j, \phi_j:U_j\rightarrow\RR^{k_j})$. Let $\Omega\subset X$ be a bounded open subset of $X$ and let $\{f_j\colon U_j \cap \Omega \rightarrow \RR^{k_j}\}$ be a collection of Borel functions. Then for every $\epsilon>0$, there exist a compact set $K\subset U$ and a Lipschitz function $u\in C_c(\Omega)$ such that
\begin{equation}\label{mplemma1}
\mu(\Omega\setminus K)< \epsilon,
\end{equation}
\begin{equation}\label{mplemma2}
d^j u = f_j \text{ a.e. in } U_j\cap K
\end{equation}
for each $j\in J$, and
\begin{equation}\label{mplemma3}
|u(x)|\leq \epsilon \min\{1, \dist^2(x, X\setminus \Omega)\}
\end{equation}
for all $x\in X$.
\end{lemma}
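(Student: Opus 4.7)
The plan is to cover a large compact subset of $\Omega$ by finitely many pairwise disjoint Christ cubes at a single, very small scale, apply Theorem \ref{alberti} independently on each cube, and glue the pieces by disjointness. By Lemma \ref{WLOGlemma} I may assume the measurable differentiable structure is normalized. Since $\Omega$ is bounded and $\mu$ is doubling, $\mu(\Omega)<\infty$; pick a compact $K_0\subset\Omega$ with $\delta:=\dist(K_0,X\setminus\Omega)>0$ and $\mu(\Omega\setminus K_0)<\epsilon/3$, and truncate each $f_j$ coordinate-wise at some large height $r$ to obtain bounded Borel functions $\tilde f_j$ which agree with $f_j$ on a further compact $K_1\subset K_0$ with $\mu(K_0\setminus K_1)<\epsilon/3$.

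Next, fix a scale $\ell$ large enough that $a_1c^\ell<\delta/10$ (additional smallness will be imposed below), and let $\mathcal{F}=\{Q\in\Delta_\ell:Q\cap K_1\neq\emptyset\}$, which is finite by compactness of $K_1$. Each $Q\in\mathcal{F}$ lies in $B(z_Q,a_1c^\ell)\subset\Omega$ and has $\dist(Q,X\setminus\Omega)\geq 4\delta/5$. On each such $Q$ apply Theorem \ref{alberti} to $\tilde f_j|_Q$ with parameter $\epsilon/(3\mu(\Omega))$ to produce a Lipschitz $v_Q\in C_0(Q)$ with $d^j v_Q=\tilde f_j$ a.e.\ on $U_j\cap(Q\setminus A_Q)$, $\mu(A_Q)\leq(\epsilon/(3\mu(\Omega)))\mu(Q)$, and (from (\ref{alberti4}), the normalization $\LIP(\phi_j)=1$, and Proposition \ref{PIconsequences}(b)) $\LIP(v_Q)\leq M$ for some $M$ depending only on $r,\epsilon,\mu(\Omega)$ and the data of $X$. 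Because $v_Q\in C_0(Q)$ we obtain $\|v_Q\|_\infty\leq M\cdot a_1c^\ell$. Define $u=\sum_{Q\in\mathcal{F}}v_Q$, extended by $0$ outside $\bigcup_{Q\in\mathcal{F}}\overline{Q}$. Disjointness of the cubes makes this a finite sum equal to $v_Q$ on each $Q$ and to $0$ elsewhere, and each $v_Q$ extends continuously by $0$; hence $u\in C_c(\Omega)$. Lemma \ref{lipschitzlemma} applied to the disjoint open cubes, together with quasiconvexity (Proposition \ref{PIconsequences}(a)), gives that $u$ is Lipschitz on $X$.

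Finally, set $K=K_1\setminus\bigcup_{Q\in\mathcal{F}}A_Q$: then $\mu(\Omega\setminus K)\leq\mu(\Omega\setminus K_1)+\sum_{Q\in\mathcal{F}}\mu(A_Q)<\epsilon$ (giving (\ref{mplemma1})), and for $x\in K\cap U_j$, $x\in Q\setminus A_Q$ for some $Q\in\mathcal{F}$, so $d^j u(x)=d^j v_Q(x)=\tilde f_j(x)=f_j(x)$ (giving (\ref{mplemma2})). For (\ref{mplemma3}), pick $\ell$ at the outset also large enough that $M\cdot a_1c^\ell\leq\epsilon\min(1,(4\delta/5)^2)$; this is possible since $r,M,\delta,\epsilon,\mu(\Omega)$ are all fixed. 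Then on each cube $Q\in\mathcal{F}$, $|u(x)|\leq\|v_Q\|_\infty\leq\epsilon$ and, since $\dist(x,X\setminus\Omega)\geq 4\delta/5$, $|u(x)|\leq\epsilon(4\delta/5)^2\leq\epsilon\dist^2(x,X\setminus\Omega)$; off the cubes $u=0$. The main obstacle is decoupling the smallness of $u$ from the (uncontrolled) size of the $f_j$: this is resolved by taking the cubes so small that the trivial estimate $\|v_Q\|_\infty\leq\LIP(v_Q)\cdot(\text{cube radius})$ already absorbs the Lipschitz constant produced by Theorem \ref{alberti} and delivers both the $\epsilon$-bound and the quadratic decay.
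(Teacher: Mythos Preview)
Your proposal is correct and follows essentially the same route as the paper: pass to a compact subset at positive distance from $X\setminus\Omega$, reduce to uniformly bounded data, cover by finitely many small Christ cubes, apply Theorem \ref{alberti} on each cube, and sum, obtaining the sup bound (\ref{mplemma3}) from the Lipschitz estimate times the cube diameter. The only differences are cosmetic (you truncate the $f_j$ rather than restrict them, split $\epsilon$ into thirds rather than halves and quarters, and invoke Lemma \ref{lipschitzlemma} where a finite sum of Lipschitz functions would already do); you should also note, as the paper does, that one may assume $\Omega\neq X$ so that $\delta>0$ is well defined.
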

\begin{proof}
We can again assume without loss of generality that $\Omega\neq X$, otherwise we replace $\Omega=X$ by $X\setminus\{x_0\}$ for some $x_0\in X$.  Extend the functions $f_j$ to all of $X$ by letting $f_j=0$ off of $U_j\cap \Omega$.

Fix an open set $\Omega'$ compactly contained in $\Omega$ with $\mu(\Omega\setminus \Omega')<\epsilon/2$.

As in Step 2 in the proof of Theorem \ref{alberti}, we can find a compact set $B \subset \Omega'$ such that $\mu(\Omega'\setminus B)<\epsilon/4$ and $\{f_j\}$ are uniformly bounded on $B$, i.e., $\sup_{j\in J} \|f_j\|_{L^\infty(B)} = M < \infty$.

For each $j\in J$, let $g_j = f_j \chi_B$, so the functions $g_j$ are uniformly bounded by the constant $M>0$. Let
$$ \Delta = \min\{1, \dist^2(\Omega', X\setminus \Omega)\}$$
and
$$ d= \epsilon^{1+\frac{1}{\eta}} \Delta / (1+(8\mu(\Omega'))^{\frac{1}{\eta}}CM), $$
where $C$ and $\eta$ are the constants from Theorem \ref{alberti}.

Choose $k$ large so that there are cubes $Q_1, \dots, Q_m\subset \Omega'$ in $\Delta_k$, of diameter at most $d$, that satisfy
$$ \mu(\Omega'\setminus \cup_1^m Q_i)<\epsilon/4. $$
(Note that the doubling property of $\mu$ and the boundedness of $\Omega$ implies that the collection $\{Q_1, \dots, Q_m\}$ really is finite.)

For each $1\leq i \leq m$, we now apply Theorem \ref{alberti} to the collection $\{g_j\}$ in the cube $Q_i$ with parameter $\epsilon' = \epsilon/8\mu(\Omega')$. For each $1\leq i \leq m$, we obtain a compact set $K_i \subset Q_i$ with $\mu(Q_i\setminus K_i)\leq \epsilon'\mu(Q_i)$ and a Lipschitz function $u_i\in C_c(Q_i)$ such that, for each $j\in J$, $d^j u_i = g_j$ almost everywhere in $U_j \cap K_i$.

Furthermore, the remark after the statement of Theorem \ref{alberti} shows that
$$ \LIP u_i \leq C(\epsilon')^{-\frac{1}{\eta}}M.$$

As $u_i\in C_c(Q_i)$, it follows that, for each $1\leq i\leq m$,
$$ \|u_i\|_\infty \leq (\diam Q_i) \LIP(u_i) \leq d C(\epsilon')^{-\frac{1}{\eta}}M < \epsilon\Delta. $$

Let $K = B \cap \left(\cup_{i=1}^m K_i\right)$, a compact subset of $\Omega$. Our choices easily imply that
$$ \mu(\Omega\setminus K)< \epsilon,$$
which verifies (\ref{mplemma1}).

Let $u=\sum_{i=1}^m u_i$. Then $u$ is a Lipschitz function in $C_c(\Omega)$ that satisfies
$$ d^j u = f_j \text{ almost everywhere in } U_j \cap K,$$
so (\ref{mplemma2}) holds.

To verify the final condition, note that $u$ is identically zero outside of $\Omega'$, so (\ref{mplemma3}) holds there automatically. For $x\in \Omega'$, we have
$$ |u(x)| \leq \sup_i \|u_i\|_\infty < \epsilon \Delta  \leq \epsilon \min\{1, \dist^2(x, X\setminus \Omega)\}. $$
Thus, the final condition (\ref{mplemma3}) of Lemma \ref{mplemma} is verified.
\end{proof}

We now prove Theorem \ref{mpthm}. (To avoid some cumbersome subscripts, we change notation slightly and write $\Lip(g)(x)$ instead of $\Lip_g(x)$.)

\begin{proof}[Proof of Theorem \ref{mpthm}]
We again closely follow \cite{MP08}.

By Lemma \ref{WLOGlemma}, we may assume that the measurable differentiable structure is normalized. Without loss of generality, we also assume that $\epsilon <1$. Fix $x_0\in X$ and let $B_i = B(x_0, i)$ for each $i\in \mathbb{N}$.

We repeatedly apply Lemma \ref{mplemma}. We inductively construct compact sets $K_i\subset \Omega_i = \Omega\cap B_i \setminus \cup_{k=1}^{i-1} K_k$ and Lipschitz functions $u_i\in C_c(\Omega_i)$ such that, for each $i\in \mathbb{N}$
\begin{equation}\label{mplemmause1}
\mu(\Omega_i \setminus K_i) < 2^{-i}\epsilon < 2^{-i},
\end{equation}
\begin{equation}\label{mplemmause2}
d^j u_i = f_j - \sum_{k=1}^{i-1} d^j u_k \text{ a.e. in } K_i,
\end{equation}
and
\begin{equation}\label{mplemmause3}
|u_i(x)|\leq 2^{-i}\epsilon \min\{1, \dist^2(x,X\setminus \Omega_i)\}
\end{equation}
for all $x\in X$.

Let $K= \cup_{i=1}^\infty K_i$ and let $u = \sum_{i=1}^\infty u_i$. Note that $u$ is a continuous function, because the bound $\|u_i\|_\infty\leq 2^{-i}\epsilon$ from (\ref{mplemmause3}) implies the uniform convergence of this sum. It also implies that $\|u\|_\infty \leq \epsilon$, verifying the first part of (\ref{mpthm1}).

The second part of (\ref{mpthm1}) also follows immediately, by observing that
$$ \{u\neq 0\} \subset \bigcup_{i=1}^\infty \{u_i\neq 0\} \subset \bigcup_{i=1}^\infty \Omega_i \subset \Omega.$$

In addition, $\mu\left((\Omega \cap B_i) \setminus K\right) \leq \mu(\Omega_k \setminus K_k)\leq 2^{-k}$ whenever $k\geq i$, which implies that $\mu(\Omega \cap B_i \setminus K) = 0$ for each $i\in\mathbb{N}$ and thus that $\mu(\Omega \setminus K)=0$.

It remains to verify \ref{mpthm2} and \ref{mpthm3}.

We first claim that if $x\in K_i$ and $k>i$, then
\begin{equation}\label{Lipk>i}
\Lip\left(\sum_{k=i+1}^\infty u_k\right)(x) = 0.
\end{equation}
Indeed, note that for $k>i$ and $x\in K_i$, we have $K_i \cap \Omega_k = \emptyset$ and so $u_k(x)=0$. Fix any $y\in X$. If $y\notin \Omega_k$, then $u(y)=0$ as well. If $y\in \Omega_k$, then
$$ |u_k(y) - u_k(x)| = |u_k(y)| \leq 2^{-i}\epsilon d(x,y)^2 $$
by (\ref{mplemmause3}). So, in either case, we have 
$$ |u_k(y) - u_k(x)| \leq 2^{-i}\epsilon d(x,y)^2 $$
whenever $x\in K_i$, $y\in X$, and $k>i$. Summing this over all $k>i$ immediately proves (\ref{Lipk>i}).

Therefore, for almost every $x\in K_i\cap U_j$, we have the following: 
\begin{align*}
\Lip(u - f_j(x)\cdot \phi_j)(x) &\leq \Lip\left(\sum_{k=1}^i u_k - f_j(x)\cdot \phi_j\right)(x)\\
&\leq \Lip\left(\sum_{k=1}^i u_k - \left(\sum_{k=1}^i d^j u_k(x)\right)\cdot \phi_j\right)(x)\\
&= 0.
\end{align*}
It follows that at almost every point in $K_i \cap U_j$, the function $u$ is differentiable with $d^j u = f_j$. Because $\mu(\Omega\setminus \cup K_i)=0$, it follows that $d^j u = f_j$ almost everywhere in $\Omega \cap U_j$. This proves (\ref{mpthm2}).

Finally, we must show (\ref{mpthm3}), that $\Lip_u = 0$ everywhere in $X\setminus \Omega$. Fix $x\in X\setminus \Omega$. If $y\in X\setminus \Omega$, then $u(x)= u(y) = 0$.

If $y\in \Omega$, then
$$|u(y) - u(x)| = |u(y)| \leq \epsilon \dist^2(x, X\setminus \Omega) \leq \epsilon d(x,y)^2.$$
Thus, for any $x\in X\setminus \Omega$ and any $y\in X$, we have
$$|u(y) - u(x)|\leq \epsilon d(x,y)^2,$$
which immediately implies that $\Lip_u(x)=0$. This completes the argument.
\end{proof}

\bibliography{unifiedbib}{}
\bibliographystyle{plain}

\end{document}